\numberwithin{equation}{section}
\def\P{{\mathbb P}}
\newtheorem{theorem}{Theorem}[section]
\newtheorem{proposition}[theorem]{Proposition}
\theoremstyle{definition}
\newtheorem{definition and remark}[theorem]{Definition and Remark}
\newtheorem{remark}[theorem]{Remark}
\newtheorem{remark and definition}[theorem]{Remark and Definition}
\newtheorem{remark and notation}[theorem]{Remark and Notation}
\newtheorem{notation and remark}[theorem]{Notation and Remark}
\newtheorem{notation and convention}[theorem]{Notation and Convention}
\newtheorem{notation and remarks}[theorem]{Notation and Remarks}
\newtheorem{notation and reminder}[theorem]{Notation and Reminder}
\newtheorem{example}[theorem]{Example}
\newtheorem{construction and examples}[theorem]{Construction and Examples}
\newtheorem{problem and remark}[theorem]{Problem and Remark}
\title{On the rank index of some quadratic varieties}
\begin{document}

\author{Hyunsuk Moon}
\address{Hyunsuk Moon,KIAS, Seoul 02455, Republic of Korea}
\email{hsmoon@kias.re.kr}

\author{Euisung Park }
\address{Department of Mathematics, Korea University, Seoul 02841, Republic of Korea}
\email{euisungpark@korea.ac.kr}

\date{Seoul, \today}

\subjclass[2]{Primary: 14N99, 14Q15.}

\keywords{rank index, rational normal scroll, del Pezzo variety, Segre variety, Grassmannian of lines}

\thispagestyle{empty} \maketitle

\setcounter{page}{1}

\begin{abstract}
Regarding the generating structure of the homogeneous ideal of a projective variety $X \subset \P^r$, we define the rank index of $X$ to be the smallest integer $k$ such that $I(X)$ can be generated by quadratic polynomials of rank at most $k$. Recently it is shown in \cite{HLMP} that every Veronese embedding has rank index $3$ if the base field has characteristic $\ne 2, 3$. In this paper, we introduce some basic ways of how to calculate the rank index and find its values when $X$ is some other classical projective varieties such as rational normal scrolls, del Pezzo varieties, Segre varieties and the Pl\"{u}cker embedding of the Grassmannian of lines.
\end{abstract}

\setcounter{page}{1}

\section{Introduction}
\noindent Let $X \subset \P^r$ be a nondegenerate irreducible projective variety over an algebraically closed field $\mathbb{K}$ and let $I(X)$ be the homogeneous ideal of $X$ in the homogeneous coordinate ring $R = \mathbb{K}[z_0 , z_1 , \ldots,z_r]$ of $\P^r$. Describing generators of $I(X)$ is a basic problem to understand algebraic properties of $X$. We say that $X$ is a quadratic variety if $I(X)$ is generated by quadratic polynomials. Over the past few decades, several structural features of $I(X)$ such as Green-Lazarsfeld's property $N_p$ (cf. \cite{Gre1}, \cite{Gre2}, \cite{GL}, \cite{EL}, \cite{GP}, \cite{Ina}, etc) and the determinantal presentation (cf. \cite{EKS}, \cite{Pu}, \cite{Ha}, \cite{B}, \cite{SS}, etc) have attracted considerable attention. Recently, the authors in \cite{HLMP} begin to study the finer structure of the finite dimensional $\mathbb{K}$-vector space $I(X)_2$ of quadratic defining equations of $X$ through the rank of its members. More precisely, we say that $X$ satisfies property $QR(k)$ if $I(X)$ can be generated by quadrics of rank at most $k$. Also the rank index of $X$, denoted by $\mbox{rank-index}(X)$, is defined to be the smallest integer $k$ such that $X$ satisfies property $QR(k)$. Obviously, the rank index of $X$ is preserved under the coordinate change of $\P^r$. Also, it is at least $3$ since $X$ is irreducible and nondegenerate. There are many known cases such that the rank index is at most $4$. For example, several classical constructions in projective geometry, such as rational normal scrolls, Veronese varieties and Segre varieties of more than two projective spaces, have rank index at most $4$.

Turning to the property $QR(3)$, it is shown in \cite{HLMP} that the rank index of the $d$th Veronese embedding of $\P^n$ is equal to $3$ when ${\rm char} (\mathbb{K}) \neq 2,3$ and when ${\rm char} (\mathbb{K}) = 3$ and $n=1$ or $n=d=2$. The rank index of the second Veronese embedding of $\P^n$ is equal to $4$ for all $n \geq 3$ if ${\rm char} (\mathbb{K}) = 3$. Thus it remains only if ${\rm char} (\mathbb{K}) = 3$, $n \geq 2$ and $d \geq 3$.

The main purpose of the present paper is to find the rank index of some other classical projective varieties such as rational normal scrolls, del Pezzo varieties, Segre varieties and the Pl\"{u}cker embedding of the Grassmannian of lines. For these varieties, the upper bound of the rank index is already well known. For example, the homogeneous ideal of the Pl\"{u}cker embedding of the Grassmannian of lines is generated by the Pl\"{u}cker relations which are quadratic polynomials of rank $6$ (cf. \cite[Theorem 8.15]{Muk}) and hence the rank index is at most $6$. To find the exact value of the rank index of these varieties, we develop a basic theory to calculate the lower bound of the rank index using the symmetric matrix associated to a basis of the $\mathbb{K}$-vector space of quadratic defining equations. For details, see Section $2.1$.

Theorem \ref{thm:RNS} says that the rank index of a smooth rational normal scroll $X$ is equal to $3$ if $X$ is a curve and $4$ otherwise. Theorem \ref{thm:Segre} says that the rank index of a Segre variety is equal to $4$. Theorem \ref{thm:Grassmannian} says that the rank index of the Pl\"{u}cker embedding of the Grassmannian of lines is equal to $6$.

In section 6, we investigate the rank index of del Pezzo varieties. Comparing with various well-known and nice characterizations of del Pezzo varieties, we prove that the rank indices of them are different in each case.\\

The paper is structured as follows. In $\S$ $2$, we prove some basic properties of the rank index. From $\S$ $3$ to $\S$ $6$, we study the rank indices of rational normal scrolls, Segre varieties, Pl\"{u}cker embedding of Grassmannian of lines and del Pezzo varieties, in turn. \\

\noindent {\bf Acknowledgement.} The first named author was supported by a KIAS Individual Grant(MG083101) at Korea Institute for Advanced Study. The second named author was supported by the National Research Foundation of Korea(NRF) grant funded by the Korea government(MSIT) (No. 2022R1A2C1002784).

\section{Preliminary}\label{Preliminary}
\noindent Let $X \subset \P^r$ be a nondegenerate irreducible projective variety whose homogeneous ideal $I(X)$ in  the homogeneous coordinate ring $R = \mathbb{K} [z_0 , z_1 , \ldots , z_r ]$ of $\P^r$ is generated by quadratic polynomials. Let $\{ Q_0 , \ldots , Q_t \}$ be a basis of $I(X)_2$. In this section we will explain how to compute the rank index of $X$ from $Q_0 , \ldots , Q_t$. Also we will show a basic fact about the behavior of the rank index under taking a general hyperplane section.

\subsection{Computation of the rank index} Let $X \subset \P^r$ and $\{ Q_0 , \ldots , Q_t \}$ be as above. A general member $Q$ of $I(X)_2$ is of the form
\begin{equation*}
Q = x_0 Q_0 + \cdots + x_t Q_t \quad \mbox{for some} \quad x_0 , \ldots , x_t \in \mathbb{K}.
\end{equation*}
We identify $\P (I(X)_2 )$ with $\P^t$ by sending $[Q]$ to $[x_0 , \ldots , x_t ]$. Now, let $M$ be the $(r+1) \times (r+1)$ symmetric matrix associated with $Q$. Its entries are linear forms in $x_0 , \ldots , x_t$. For each $1 \leq k \leq r$, we define the locus
\begin{equation*}
\Phi_k (X) := \{ [Q] ~|~ Q \in I(X)_2 - \{0\}, ~\mbox {rank} (Q) \leq k \} \subset \P ( I(X)_2 ) = \P^t
\end{equation*}
of all quadratic polynomials of rank $\leq k$ in the projective space $\P^t$. Note that $\Phi_k (X)$ is a projective algebraic set in $\P^t$ since it is the zero set of the ideal $I(k+1,M)$ of $\mathbb{K}[x_0 , \ldots , x_t ]$ generated by all $(k+1) \times (k+1)$ minors of $M$. There is a descending filtration of $\P^t$ associated to $X$:
\begin{equation*}
\emptyset = \Phi_1 (X) = \Phi_2 (X) \subset \Phi_3 (X) \subset \Phi_4 (X) \subset \cdots \subset \Phi_r (X) \subset \P^t
\end{equation*}
Here, $\Phi_1 (X)$ and $\Phi_2 (X)$ are empty since $X$ is irreducible and nondegenerate in $\P^r$. Furthermore, any change of coordinates $T : \P^r \rightarrow \P^r$ induces a projective equivalence between $\Phi_k (X)$ and $\Phi_k (T(X))$.

\begin{proposition}\label{prop:computation of the rank index}
Keep the previous notations. Then
$$\emph{rank-index}(X) = \mbox{min} \{s ~|~\sqrt{I(s+1,M)} \quad \mbox{has no nonzero linear form.} \}$$
\end{proposition}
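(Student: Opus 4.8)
The plan is to translate property $QR(k)$ into a purely projective-geometric nondegeneracy condition on the locus $\Phi_k(X)$, and then convert that condition into the radical statement via the Nullstellensatz.

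First I would observe that, because $I(X)$ is generated by quadrics, property $QR(k)$ is equivalent to the assertion that the $\mathbb{K}$-vector space $I(X)_2$ is spanned by its elements of rank at most $k$. For the forward direction, note that the degree-$2$ component of the ideal generated by a set $G$ of quadrics is exactly $\mathrm{span}_{\mathbb{K}}(G)$, since in degree $2$ one may only form $\mathbb{K}$-linear combinations of the $Q \in G$; hence if $G$ generates $I(X)$ then $\mathrm{span}_{\mathbb{K}}(G) = I(X)_2$. Conversely, any family of quadrics spanning $I(X)_2$ generates $I(X)$ as an ideal, because $I(X)$ is generated by $I(X)_2$. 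Consequently $\mathrm{rank}\text{-}\mathrm{index}(X)$ is the least $k$ for which $I(X)_2$ equals the linear span of $\{\, Q \in I(X)_2 : \mathrm{rank}(Q) \le k \,\}$.

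Next I would pass to $\mathbb{P}^t = \mathbb{P}(I(X)_2)$. Under the identification of $[Q]$ with $[x_0,\ldots,x_t]$, the nonzero quadrics of rank at most $k$ correspond precisely to the points of $\Phi_k(X)$, and a linear subspace of $I(X)_2$ corresponds to a linear subspace of $\mathbb{P}^t$. Thus $I(X)_2$ is spanned by its rank-$\le k$ members if and only if $\Phi_k(X)$ lies in no hyperplane of $\mathbb{P}^t$, i.e.\ if and only if $\Phi_k(X)$ is nondegenerate. Finally I would convert nondegeneracy into the radical condition: since $\mathbb{K}$ is algebraically closed and $\Phi_k(X)$ is the zero locus of $I(k+1,M)$, the projective Nullstellensatz gives $\sqrt{I(k+1,M)} = I(\Phi_k(X))$, the homogeneous ideal of all forms vanishing on $\Phi_k(X)$. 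A nonzero linear form lying in this ideal is the same datum as a hyperplane of $\mathbb{P}^t$ containing $\Phi_k(X)$, so $\Phi_k(X)$ is nondegenerate exactly when $\sqrt{I(k+1,M)}$ contains no nonzero linear form. Chaining the three equivalences shows that the set of $k$ for which $QR(k)$ holds coincides with the set of $s$ for which $\sqrt{I(s+1,M)}$ has no nonzero linear form, and taking minima yields the stated formula.

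The individual translations are routine, and the conceptual core is simply the span-versus-radical dictionary of the third step. The one place where care is genuinely needed, and the main (if minor) obstacle, is the degenerate range $k \le 2$, where $\Phi_k(X) = \emptyset$: here the projective Nullstellensatz forces $\sqrt{I(k+1,M)}$ to contain the irrelevant ideal $(x_0,\ldots,x_t)$, hence nonzero linear forms, which correctly matches the fact that the empty set spans nothing and $QR(k)$ fails. Verifying that this boundary behaviour is consistent on both sides of the equivalence, together with the implicit use of quadratic generation in the first step, is what must be checked explicitly to make the argument airtight.
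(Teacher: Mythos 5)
Your proposal is correct and follows essentially the same route as the paper's proof: both identify $\sqrt{I(s+1,M)}$ with $I(\Phi_s(X))$ via the Nullstellensatz and then use the equivalence between property $QR(s)$, the spanning of $\P(I(X)_2)$ by $\Phi_s(X)$, and the absence of nonzero linear forms in $I(\Phi_s(X))$. The paper states these equivalences without elaboration, whereas you supply the supporting details (the span-versus-generation step for quadratically generated ideals and the empty-locus case $s\le 2$), so your write-up is simply a fuller version of the same argument.
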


\begin{proof}
We know already that
\begin{equation*}
V (I(s+1,M)) = \Phi_s (X) \quad \mbox{and} \quad I(\Phi_s (X)) = \sqrt{I(s+1,M)}.
\end{equation*}
Moreover, the following statements are equivalent:\\

\begin{enumerate}
\item[$(i)$] $I(X)$ is generated by quadratic polynomials of rank at most $s$;
\item[$(ii)$] $\Phi_s (X)$ spans $\P (I(X)_2 ) = \P^t$;
\item[$(iii)$] $I(\Phi_s (X)) = \sqrt{I(s+1,M)}$ has no nonzero linear form.\\
\end{enumerate}

\noindent Therefore the rank index of $X$ is equal to the smallest $s$ such that $\sqrt{I(s+1,M)}$ has no nonzero linear forms.
\end{proof}

\subsection{The rank index of a general hyperplane section} Next we show the following elementary but useful fact about the behavior of the rank index under taking a general hyperplane section.

\begin{proposition}\label{prop:hyperplane section}
Let $X \subset \P^r$ be such that ${\mbox depth} (X) \geq 2$ and $I(X)$ is generated by quadratic polynomials. If $Y  \subset \P^{r-1}$ is a general hyperplane section of $X$, then
\begin{equation*}
\emph{rank-index}(X) \leq \emph{rank-index}(Y)+2.
\end{equation*}
\end{proposition}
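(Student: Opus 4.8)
The plan is to realize $I(Y)_2$ as a restriction of $I(X)_2$ and then to lift low-rank generators back up to $X$ at the cost of one rank-two correction term. Since the rank index is invariant under linear changes of coordinates, after a general coordinate change I may assume the general hyperplane is $H=\{z_r=0\}$, so that $Y=X\cap H\subset\P^{r-1}$ has coordinate ring a quotient of $R'=\mathbb{K}[z_0,\ldots,z_{r-1}]$. Let $\rho\colon R_2\to R'_2$ be the restriction map $Q\mapsto Q|_{z_r=0}$. The whole argument rests on the claim that $\rho$ cuts out an isomorphism $\rho\colon I(X)_2\xrightarrow{\sim}I(Y)_2$ and that $I(Y)$ is again generated by quadrics, so that $\mathrm{rank}\text{-}\mathrm{index}(Y)$ is even defined; I would isolate this as a preliminary claim and use the depth hypothesis exactly here.

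To establish the claim I would argue injectivity and surjectivity of $\rho|_{I(X)_2}$ separately. Injectivity is cheap and needs no depth: because $X$ is irreducible and nondegenerate, $I(X)$ is a prime ideal with $z_r\notin I(X)$, so $z_rL\in I(X)$ for a linear form $L$ forces $L\in I(X)_1=0$; since $\ker\rho\cap R_2=z_r\cdot R_1$, this gives $\ker(\rho|_{I(X)_2})=0$. Surjectivity is where $\depth(X)\ge 2$ enters: a general $z_r$ is a nonzerodivisor on $A(X)=R/I(X)$, and the vanishing $H^0_{\mathfrak m}(A(X))=H^1_{\mathfrak m}(A(X))=0$ forces $A(X)/z_rA(X)$ to be saturated, hence equal to $A(Y)=R'/I(Y)$. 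Feeding this into the exact sequence $0\to A(X)(-1)\xrightarrow{z_r}A(X)\to A(Y)\to 0$ in degree $2$ yields $\dim_{\mathbb K}I(Y)_2=\dim_{\mathbb K}I(X)_2$, so the injection $\rho$ is onto; the same nonzerodivisor sequence shows the graded Betti numbers are preserved, so $I(Y)$ is generated by quadrics. I expect this step—upgrading the scheme-theoretic section to an ideal-theoretic isomorphism on quadrics—to be the main obstacle, and the one genuine use of the hypothesis.

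Granting the isomorphism, the rest is a short rank estimate that I would not belabor. Set $k=\mathrm{rank}\text{-}\mathrm{index}(Y)$ and choose generators $\bar Q_1,\ldots,\bar Q_m$ of $I(Y)_2$ with $\mathrm{rank}(\bar Q_i)\le k$; viewing each $\bar Q_i$ as a quadric in $R$ not involving $z_r$ does not change its rank. Let $Q_i=\rho^{-1}(\bar Q_i)\in I(X)_2$; any preimage has the shape $Q_i=\bar Q_i+z_rM_i$ for some linear form $M_i$. As $z_rM_i$ is a product of two linear forms its symmetric matrix has rank at most $2$, so subadditivity of the rank of symmetric matrices gives $\mathrm{rank}(Q_i)\le\mathrm{rank}(\bar Q_i)+2\le k+2$. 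Finally, because $\rho$ is an isomorphism and the $\bar Q_i$ span $I(Y)_2$, the lifts $Q_i$ span $I(X)_2$; since $I(X)$ is generated by $I(X)_2$, it is generated by quadrics of rank at most $k+2$, whence $\mathrm{rank}\text{-}\mathrm{index}(X)\le k+2=\mathrm{rank}\text{-}\mathrm{index}(Y)+2$.
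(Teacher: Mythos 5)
Your proof is correct and follows essentially the same route as the paper's: use the depth hypothesis to identify $I(Y)_2$ with the restriction of $I(X)_2$, then lift rank $\leq k$ generators of $I(Y)$ to quadrics $Q=\bar Q+z_r h$ of rank at most $k+2$. You are in fact more explicit than the paper on two points it leaves implicit, namely the injectivity of the restriction map (which guarantees that the lifts span $I(X)_2$ and hence generate $I(X)$) and the fact that $I(Y)$ is again generated by quadrics, so its rank index is defined.
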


\begin{proof}
For the simplicity, we may assume that $Y = X \cap V(z_r )$. From the depth condition of $X$, we know that
\begin{equation*}
I(Y) = \left( I(X)+\langle z_r \rangle \right) / \langle z_r \rangle \quad \mbox{in} \quad \mathbb{K} [z_0 , z_1 , \ldots , z_r ] / \langle z_r \rangle = \mathbb{K} [z_0 , z_1 , \ldots , z_{r-1} ].
\end{equation*}
Let $s$ denote the rank index of $Y$. Now, let $Q_0 , Q_1 , \ldots , Q_t$ be quadratic generators of $I(X)$. Then the set $\{ Q_i ' = Q_i +\langle z_r \rangle ~|~ 0 \leq i \leq t \}$ generates $I(Y)$. So, we may assume that the rank of $Q_i '$ is at most $s$ for all $0 \leq i \leq t$. Now, regarding $\mathbb{K} [z_0 , z_1 , \ldots , z_{r-1} ]$ as a subset of $\mathbb{K} [z_0 , z_1 , \ldots , z_{r} ]$, it holds that
$$Q_i = Q_i ' + z_r h_i \quad \mbox{for some linear form} \quad h_i \in \mathbb{K} [z_0 , z_1 , \ldots , z_{r} ].$$
This shows that
$$\mbox{rank}(Q_i ) \leq \mbox{rank}(Q_i ' ) + 2 \leq s+2$$
and hence the rank index of $X$ is less than or equal to $\mbox{rank-index}(Y)+2$.
\end{proof}

\begin{remark}
In Theorem \ref{thm:non-normal del Pezzo}, we show that there is a surface $X \subset \P^r$ with a general hyperplane section $\mathcal{C} \subset \P^{r-1}$ such that
$$\mbox{rank-index}(X) = 5 \quad \mbox{and} \quad \mbox{rank-index}(\mathcal{C}) = 3,$$
and that there is a threefold $X \subset \P^r$ with a general hyperplane section $S \subset \P^{r-1}$ such that
$$\mbox{rank-index}(X) = 6 \quad \mbox{and} \quad \mbox{rank-index}(S) = 4 .$$
Thus the upper bound of the rank index of $X$ in  Proposition \ref{prop:hyperplane section} cannot be improved.
\end{remark}

\subsection{The construction of rank $3$ quadratic equations}
Let $L = \mathcal{O}_X (1)$ and suppose that $X \subset \P^r$ is the linearly normal embedding of $X$ by $L$. Thus there is a natural isomorphism
$$f : H^0 (X,L) \rightarrow R_1$$
of $\mathbb{K}$-vector spaces. Now, assume that $L$ is decomposed as $L=A ^{2} \otimes B$ for some line bundles $A$ and $B$ on $X$ such that $h^0 (X,A) \geq 2$ and $h^0 (X,B) \geq 1$. Define the map
\begin{equation*}
Q_{A,B} : H^0 (X,A) \times H^0 (X,A) \times H^0 (X,B) \rightarrow I(X)_2
\end{equation*}
by
\begin{equation*}
Q_{A,B} (s,t,h) = f(s \otimes s \otimes h) f(t \otimes t \otimes h) - f(s
\otimes t \otimes h )^2 .
\end{equation*}
This map is well-defined since the restriction of $Q_{A,B} (s,t,h)$ to $X$ becomes
\begin{equation*}
(s\otimes s\otimes h)|_X \times(t\otimes t\otimes h)|_X - (s\otimes
t\otimes h)|_X ^2 =0.
\end{equation*}
Thus, $Q_{A,B} (s,t,h)$ is either $0$ or else a rank $3$ quadratic equation of $X$.

\begin{example}\label{ex:2-uple of P3}
Let $X = \nu_2 (\P^3 ) \subset \P^9$ and consider the $Q$-map for the decomposition $\mathcal{O}_{\P^3} (3) = \mathcal{O}_{\P^3} (1)^2 \otimes \mathcal{O}_{\P^3}$. That is,
\begin{equation*}
Q : H^0 (X,\mathcal{O}_{\P^3} (1) ) \times H^0 (X,\mathcal{O}_{\P^3} (1) ) \times H^0
(X,\mathcal{O}_{\P^3} ) \rightarrow I(X )_2 .
\end{equation*}
Let $\{x_0,x_1,x_2 ,x_3 \}$ be a basis of $H^0 (X,\mathcal{O}_{\P^3} (1) )$. Then we can produce the following three finite sets of rank $3$ quadratic polynomials in $I(X )$:
\begin{align*}
\Gamma_{11}&=\{Q(x_i,x_j,1) ~|~ 0\leq i <j\leq 3\}\\
\Gamma_{12}&=\{Q(x_i +x_j , x_k ,1) ~|~ 0\leq i<j\leq 3, ~0 \leq k \leq 3 ,~ k \neq i,j \}\\
\Gamma_{22}&=\{Q(x_i +x_j , x_k +x_l ,1) ~|~ \{i,j,k,l \} = \{0 ,1,2,3 \}\}
\end{align*}
Then $\Gamma := \Gamma_{11} \cup \Gamma_{12} \cup \Gamma_{22}$ consists of $21$ quadratic polynomials of rank $3$. Due to the proof of \cite[Theorem 3.1]{HLMP}, it holds that if $\mbox{char} (\mathbb{K}) \neq 2,3$ then $I(X)$ is generated by $\Gamma$ and hence $X \subset \P^9$ satisfies property $QR(3)$, while if $\mbox{char} (\mathbb{K}) = 3$ then $X \subset \P^9$ fails to satisfy property $QR(3)$.
\end{example}

\section{The rank index of rational normal scrolls}
\noindent This section is devoted to the proof of Theorem \ref{thm:RNS} below.

\begin{theorem}\label{thm:RNS}
Let $X \subset \P^r$ be an $n$-dimensional smooth rational normal scroll. Then
\begin{equation*}
\emph{rank-index}(X) = \begin{cases} 3 & \mbox{if} \quad  n=1, \quad \mbox{and}\\
                                     4 & \mbox{if} \quad n \geq 2. \end{cases}
\end{equation*}
\end{theorem}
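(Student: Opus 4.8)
The plan is to treat the curve case and the higher-dimensional case separately, since they give the values $3$ and $4$ respectively. For $n=1$ the scroll $X$ is, after a projective change of coordinates, the rational normal curve $\nu_d(\P^1)\subset\P^d$ of degree $d=\deg X$, i.e. the $d$-uple Veronese embedding of $\P^1$. As $X$ is irreducible and nondegenerate we already have $\mbox{rank-index}(X)\ge 3$, so it suffices to produce enough rank $3$ quadric generators. Here I would invoke the Veronese result quoted in the introduction (\cite{HLMP}); alternatively one can argue directly with the rank $3$ construction of \S2.3, writing $\mathcal{O}_{\P^1}(d)=A^2\otimes B$ with $A=\mathcal{O}_{\P^1}(a)$, and checking that as $s,t\in H^0(A)$ and $h\in H^0(B)$ vary the rank $3$ quadrics $Q_{A,B}(s,t,h)$ span $I(X)_2$. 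Either way $\mbox{rank-index}(X)=3$.

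Now assume $n\ge 2$. The upper bound $\mbox{rank-index}(X)\le 4$ is the one recalled in the introduction: $I(X)$ is generated by the $2\times 2$ minors of a $2\times d$ matrix of linear forms, and each such minor has rank at most $4$. For the lower bound I would apply Proposition \ref{prop:computation of the rank index} and exhibit a nonzero linear form in $\sqrt{I(4,M)}$, where $M$ is the symmetric matrix of the generic quadric $Q=\sum x_\alpha Q_\alpha$. Set up the standard determinantal presentation: after choosing coordinates $x_{i,j}$ with $1\le i\le n$ and $0\le j\le a_i$ (all $a_i\ge 1$ by smoothness), $X$ is cut out by the $2\times 2$ minors $p_{kl}$ of the block-Hankel matrix whose $i$-th block has columns $(x_{i,j},x_{i,j+1})^{\top}$ for $0\le j\le a_i-1$. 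These minors are linearly independent and span $I(X)_2$, and a minor has rank $3$ exactly when its two columns are consecutive columns inside a single block, rank $4$ otherwise. Since $n\ge 2$ there exist genuinely \emph{between-block} minors, and I let $c$ be the coordinate dual to one of them, namely to $p=x_{i,0}x_{i',1}-x_{i',0}x_{i,1}$ for two distinct blocks $i\ne i'$.

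The heart of the argument is a single determinant. Consider the $4\times 4$ principal submatrix $N$ of $M$ indexed by the four variables $x_{i,0},x_{i,1},x_{i',0},x_{i',1}$. The crucial observation is that the extremal variables $x_{i,0}$ and $x_{i',0}$ occur only as \emph{top} entries of the first columns of their blocks, and are never the squared (middle) term of a within-block minor; consequently the rows and columns of $N$ indexed by $x_{i,0}$ and $x_{i',0}$ each have a unique nonzero entry, equal to $\pm c$ and coming solely from $p$. This forces every nonzero term in the expansion of $\det N$ to use the anti-diagonal permutation, so that $\det N=\pm c^{4}$, all the remaining monomials among the four chosen variables (diagonal squares and the other cross terms) sitting in positions that are annihilated and contributing nothing. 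Hence $c^{4}\in I(4,M)$, so $c\in\sqrt{I(4,M)}$ is a nonzero linear form, and Proposition \ref{prop:computation of the rank index} gives $\mbox{rank-index}(X)>3$. Together with the upper bound this yields $\mbox{rank-index}(X)=4$.

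The main obstacle is exactly the verification that $\det N$ is a pure fourth power of the single coordinate $c$: one must confirm that no other defining minor contributes a monomial $x_{i,0}x_{i',1}$ or $x_{i,1}x_{i',0}$, and that the stray diagonal and cross contributions cannot enter any nonzero permutation term. Choosing the \emph{first} variable of each block (the genuinely extremal ones) is what keeps this bookkeeping clean and uniform in the $a_i$; I would carry it out in general along the lines of the explicit $3\times$-dimensional example underlying the cubic scroll $S(1,2)$. I would also record the normalization convention for the rank of a quadratic form, since the entries of the symmetric matrix carry the usual factor of $2$ off the diagonal; over a field of characteristic $\ne 2$ this only rescales $\det N$ by a unit and does not affect the conclusion.
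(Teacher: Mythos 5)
Your proposal is correct, and its core is the same as the paper's: both establish the lower bound for $n \geq 2$ via Proposition \ref{prop:computation of the rank index}, by exhibiting a $4 \times 4$ principal submatrix $N$ of $M$ whose determinant is the fourth power of the coordinate dual to a between-block minor; your $N$ is, up to relabelling, exactly the matrix the paper writes down for $S(a,b)$ using the variables $x_0, x_1, y_0, y_1$, and your bookkeeping claim does check out (a variable $x_{i,0}$ occurs in a generator only in monomials of the form $x_{i,0}x_{i'',j''+1}$ with $(i'',j'')$ a column different from $(i,0)$, so inside $N$ its row has the single nonzero entry $\pm c$, forcing $\det N = c^4$). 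For $n=1$ both you and the paper simply quote \cite{HLMP}. The one genuine difference is the treatment of $n \geq 3$: you run the determinant argument uniformly for any two blocks of a smooth scroll $S(a_1, \ldots, a_n)$, whereas the paper computes only the surface case $S(a,b)$ and reduces $n \geq 3$ to it by taking a general surface section $S$ of $X$ and using that $X$ is arithmetically Cohen--Macaulay (quadric generators restrict to quadric generators of $I(S)$, and rank cannot increase under restriction to a linear subspace), so that property $QR(3)$ for $X$ would force it for $S$. Your route is more self-contained---it needs neither the ACM descent nor knowing what a general surface section of a scroll is---at the price of the block bookkeeping you correctly identify as the main obstacle; the paper's route keeps the explicit computation minimal but imports the restriction argument. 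One point you should make explicit so that Proposition \ref{prop:computation of the rank index} applies: the $2$-minors form a \emph{basis} of $I(X)_2$ (they generate $I(X)$, and their number $\binom{N}{2}$, where $N = \sum_i a_i$, equals $\dim_{\mathbb{K}} I(X)_2$), so their coefficients are genuine coordinates on $\P(I(X)_2)$ and your $c$ really is a nonzero linear form in $\sqrt{I(4,M)}$; the paper relies on the same fact implicitly when it declares $\gamma_{00} \in \sqrt{I(4,M)}$ to be a nonzero linear form.
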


\begin{proof}
It is well-known that $I(X)$ is generated by $2$-minors of a $1$-generic matrix. This implies that the rank index of $X$ is at most $4$.

If $n=1$ and hence $X \subset \P^r$ is a rational normal curve, then the rank index of $X$ is equal to $3$ by \cite[Corollary 2.4]{HLMP}.

Suppose that $n = 2$ and hence $X = S(a,b) \subset \P^{a+b+1}$ for some positive integers $a$ and $b$. Thus we have
\begin{equation*}
X = \{ [s^a x :\cdots:s^{a-i} t^i x :\cdots:t^a x : s^b y : \cdots : s^{b-j} t^j y : \cdots:t^b y ]~|~[s:t],[x:y] \in \P^1 \}.
\end{equation*}
Let $x_0 , x_1 , \ldots , x_a , y_0 , y_1 , \ldots , y_b$ be the homogeneous coordinates of $\P^{a+b+1}$. Then $I(X)$ is generated by the $2$-minors of the matrix
\begin{equation*}
\begin{bmatrix}
x_0 & x_1 & \cdots & x_{a-1} & y_0 & y_1 & \cdots & y_{b-1} \\
x_1 & x_2 & \cdots & x_a     & y_1 & y_2 & \cdots & y_{b}
\end{bmatrix}.
\end{equation*}
More precisely, $I(X)$ is generated by $A \cup B \cup C$ where
\begin{equation*}
\begin{cases}
A = \{ F_{ij} := x_i x_{j+1} - x_{i+1} x_j ~|~ 0 \leq i < j \leq a-1 \},\\
B = \{ G_{ij} := y_i y_{j+1} - y_{i+1} y_j ~|~ 0 \leq i < j \leq b-1 \} \quad \mbox{and} \\
C = \{ H_{ij} := x_i y_{j+1} - x_{i+1} y_j ~|~ 0 \leq i \leq a-1 , 0 \leq j \leq b-1 \}.
\end{cases}
\end{equation*}
Now, let
\begin{equation*}
Q = \sum  \alpha_{ij} F_{ij} + \sum \beta_{ij} G_{ij} + \sum \gamma_{ij} F_{ij}
\end{equation*}
be a general element of $I(X)_2$. Write $Q$ as
\begin{equation*}
Q = \sum  a_{ij} x_i x_j + \sum b_{ij} y_i y_j + \sum c_{ij} x_i y_j .
\end{equation*}
Then it holds that
\begin{equation*}
a_{00} = a_{01} = b_{00} = b_{01} = c_{00} = 0, \quad c_{01} = \gamma_{00} \quad \mbox{and} \quad c_{10} = -\gamma_{00} .
\end{equation*}
The symmetric matrix $M_Q$ associated to $Q$ has the following $4 \times 4$ submatrix.
\begin{equation*}
N = \begin{bmatrix}
2a_{00}  &  a_{01} & c_{00}  &  c_{01} \\
a_{01}   & 2a_{11} &  c_{10}  & c_{11} \\
c_{00}   & c_{10}  & 2b_{00}  &  b_{01} \\
c_{01}   & c_{11}  & b_{01}  &  2b_{11}
\end{bmatrix} =
 \begin{bmatrix}
0  & 0  &  0 & \gamma_{00} \\
0 & 2a_{11}   & -\gamma_{00}  & c_{11} \\
0  & -\gamma_{00}   &  0 &  0 \\
\gamma_{00}    & c_{11}  & 0    & 2b_{11}
\end{bmatrix}
\end{equation*}
Therefore $\det (N) = \gamma_{00} ^4 \in I(4,M)$ and hence $\gamma_{00} \in \sqrt{I(4,M)}$, which shows that the smooth rational normal surface scroll $X \subset \P^{a+b+1}$ fails to satisfy property $QR(3)$. This shows that the rank index of $X$ is equal to $4$.

Finally, suppose that $n \geq 3$. Then a general surface section $S$ of $X$ is a smooth rational normal surface scroll. If $X$ satisfies property $QR(3)$, then so does $S$ since $X$ is arithmetically Cohen-Macaulay. Therefore $X$ fails to satisfy property $QR(3)$, which completes the proof that the rank index of $X$ is $4$.
\end{proof}

\begin{remark}
$(1)$ Let $X \subset \P^r$ be a nondegenerate irreducible projective variety of dimension $n$, codimension $e$ and degree $d$. Then it holds always that
$d \geq e+1$. Also $X$ is called a variety of minimal degree if $d = e+1$. Varieties of minimal degree were completely classified more than hundred years ago by P. del Pezzo and E. Bertini (cf. \cite{EHa}). If $e \geq 2$, then $X$ is a variety of minimal degree if and only if it is (a cone over) the Veronese surface in $\P^5$ or (a cone over) a smooth rational normal scroll.

\noindent $(2)$ Combining Theorem \ref{thm:RNS} with \cite[Theorem 1.1 and 1.2]{HLMP} gives us the rank index of varieties of minimal degree perfectly. In particular, if $X \subset \P^r$ is a smooth variety of minimal degree and of codimension $\geq 2$ then its rank index is equal to $3$ or $4$. Also it is equal to $3$ if and only if $X$ is a Veronese variety, or equivalently, if and only if $X$ is either a rational normal curve or else the Veronese surface in $\P^5$.
\end{remark}

\section{The rank index of Segre varieties}
\noindent This section is devoted to giving a proof of Theorem \ref{thm:Segre} below.

\begin{theorem}\label{thm:Segre}
Let $X = \P^{n_1} \times \cdots \times \P^{n_{\ell}} \subset \P^r$, $r+1 = (n_1 +1) \cdots (n_{\ell} +1)$, be the Segre variety for some $\ell \geq 2$. Then
\begin{equation*}
\emph{rank-index}(X) = 4.
\end{equation*}
Furthermore, $I(X)$ has no quadratic polynomials of rank $3$.
\end{theorem}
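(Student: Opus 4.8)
The plan is to prove both assertions simultaneously by establishing the sharper statement that $I(X)_2$ contains no nonzero quadric of rank $\leq 3$. Granting this, $\Phi_3 (X) = \emptyset$, so the rank index is at least $4$; and since $I(X)$ is generated by the $2 \times 2$ minors of the flattenings of the generic hypermatrix $(z_{i_1 \cdots i_\ell})$ — each such minor having the shape $z_I z_J - z_K z_L$ with four distinct coordinates, hence rank exactly $4$ — the rank index is also at most $4$. Thus $\mbox{rank-index}(X) = 4$ and the whole theorem reduces to the rank-$3$ nonexistence.

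First I would dispose of ranks $1$ and $2$ for free: a rank-$1$ quadric is $\ell^2$ and a rank-$2$ quadric is $\ell_1 \ell_2$, and neither can lie in the ideal of the irreducible nondegenerate variety $X$ without forcing some factor to vanish on $X$, hence to be $0$. It therefore remains to exclude rank exactly $3$. Over the algebraically closed field $\mathbb{K}$, which we take of characteristic $\neq 2$, such a $Q$ admits a normal form $Q = \ell_1 ^2 + \ell_2 ^2 + \ell_3 ^2$ with $\ell_1 , \ell_2 , \ell_3$ linearly independent linear forms in $z_0 , \ldots , z_r$.

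Next I would pull back along the Segre parametrization $\varphi \colon \prod_{t=1}^{\ell} \P^{n_t} \to \P^r$, $z_{i_1 \cdots i_\ell} \mapsto \prod_{t=1}^{\ell} u^{(t)}_{i_t}$. Each $\ell_p$ pulls back to a multilinear form $f_p := \ell_p \circ \varphi$ of multidegree $(1, \ldots , 1)$ in the groups of variables $u^{(1)} , \ldots , u^{(\ell)}$, and since $X$ is nondegenerate this pullback is injective on linear forms, so $f_1 , f_2 , f_3$ stay linearly independent. Because $Q \in I(X)$, restriction to $X$ yields the polynomial identity $f_1 ^2 + f_2 ^2 + f_3 ^2 = 0$ in the UFD $S = \mathbb{K}[u^{(1)} , \ldots , u^{(\ell)} ]$.

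The heart of the argument — and the step I expect to be the main obstacle — is to deduce from this identity that $f_1 , f_2 , f_3$ are proportional, contradicting their independence. Setting $g = f_1 + \sqrt{-1}\, f_2$ and $h = f_1 - \sqrt{-1}\, f_2$, the identity reads $gh = -f_3 ^2$. If $f_3 = 0$, then $gh = 0$ forces $f_1 = \pm \sqrt{-1}\, f_2$, a dependence. If $f_3 \neq 0$, I would use multilinearity decisively: since $f_3$ has degree $1$ in each of the $\ell$ variable-groups, its irreducible factors have pairwise disjoint group-supports partitioning $\{1, \ldots , \ell\}$ and each occurs with multiplicity one, so $f_3 = q_1 \cdots q_k$ is squarefree and $f_3 ^2 = q_1 ^2 \cdots q_k ^2$. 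Matching multidegrees $(1, \ldots , 1)$ in $gh = -q_1 ^2 \cdots q_k ^2$ then forces $g$ and $h$ each to collect exactly one copy of every $q_j$, that is $g = c\, f_3$ and $h = -c^{-1} f_3$ for some scalar $c$; hence $f_1$ and $f_2$ are scalar multiples of $f_3$, the desired contradiction. This shows $I(X)_2$ has no quadric of rank $3$, and together with the bounds above completes the proof that $\mbox{rank-index}(X) = 4$.
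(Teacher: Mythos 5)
Your proof is correct (granting, as you do explicitly, that $\operatorname{char}\mathbb{K} \neq 2$; the paper needs this implicitly as well, since its normal form $Q = xy - z^2$ for a rank-$3$ quadric rests on the same classification of quadratic forms), but your key step is genuinely different from the paper's. Both proofs obtain the upper bound from the fact that $I(X)$ is generated by $2$-minors of the generic hypermatrix, each of which is a binomial in four distinct coordinates and hence has rank $4$ (the paper cites H\`a's theorem for the generation statement). For the nonexistence of rank-$3$ quadrics, however, the paper argues divisor-theoretically: writing $Q = xy - z^2$, the relation $xy = z^2$ on $X$ forces, by a parity analysis of the Weil divisors of $x$, $y$, $z$, a factorization $\mathcal{O}_X(1) = A^2 \otimes B$ with $h^0(X,A) \geq 2$ and $h^0(X,B) \geq 1$ — that is, $Q$ must arise from the $Q_{A,B}$ construction of Section 2.3 — and no such factorization of $\mathcal{O}(1,\ldots,1)$ exists on a Segre product. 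You instead pull everything back to the multigraded polynomial ring $S = \mathbb{K}[u^{(1)},\ldots,u^{(\ell)}]$ and run a unique-factorization argument on $gh = -f_3^2$: since $f_3$ has multidegree $(1,\ldots,1)$, its irreducible factors are multihomogeneous with pairwise disjoint group-supports and multiplicity one, and matching the degree in each group forces $g$ and $h$ to be scalar multiples of $f_3$, contradicting linear independence. Your bookkeeping is sound: factors of multihomogeneous polynomials are multihomogeneous, and the exponent of $q_j$ in $g$ equals $\deg_t g = 1$ for any group $t$ in the support of $q_j$. At bottom the two arguments exploit the same obstruction — the grading/Picard structure of the Segre product prevents $\mathcal{O}(1,\ldots,1)$ from being a square times an effective class — but yours is more elementary and self-contained (pure polynomial algebra in the Cox ring, no Weil divisors or section counts), whereas the paper's formulation isolates a reusable principle (every rank-$3$ quadric on $X$ comes from a decomposition $L = A^2 \otimes B$) that it also uses elsewhere, e.g.\ to produce rank-$3$ generators for Veronese and del Pezzo embeddings.
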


\begin{proof}
In \cite[Theorem 1.5]{Ha} it is proved that the ideal of the Segre variety $X$ is generated by all the $2$-minors of a generic
hypermatrix of indeterminates. This implies that the rank index of $X$ is at most $4$. Therefore the proof is completed if $X$ satisfies no quadratic polynomial of rank $3$.

Now, suppose that $I(X)$ contains an element $Q$ of rank $3$. Then $Q = xy-z^2$ for some linear forms $x,y,z$ on $\P^r$. Write the Weil divisors $\mbox{div}(z)$, $\mbox{div}(x)$ and $\mbox{div}(y)$ as
\begin{equation*}
\mbox{div}(z) = \sum_{i=1} ^t  d_i D_i ,\quad  \mbox{div}(x) = \sum_{i=1} ^t e_i D_i  \quad \mbox{and} \quad  \mbox{div}(y) = \sum_{i=1} ^t f_i D_i
\end{equation*}
where $D_1 , \ldots , D_t$ are prime divisors on $X$. Suppose that $e_1 , \ldots , e_s$ are odd and $e_{s+1} , \ldots , e_t$ are even. Since $xy = z^2$, it follows that $f_i$ is odd if and only if $1\leq i \leq s$. Therefore
\begin{equation*}
\mathcal{O}_X (1) \otimes \mathcal{O}_X (-D_1 - \cdots - D_s ) = A^2
\end{equation*}
where $H^0 (X,A)$ contains two sections, say $\alpha$ and $\beta$, such that
\begin{equation*}
\mbox{div}(x) - (D_1 + \cdots + D_s ) = \mbox{div}(\alpha^2) \quad \mbox{and} \quad  \mbox{div}(y) - (D_1 + \cdots + D_s ) = \mbox{div}(\beta^2) .
\end{equation*}
It follows that
\begin{equation*}
\mbox{div}(\alpha) = \sum_{i=1} ^{s}  \frac{e_i -1}{2} D_i + \sum_{i=s+1} ^m  \frac{e_i}{2} D_i \quad \mbox{and} \quad \mbox{div}(\beta) =  \sum_{i=1} ^{s}  \frac{f_i -1}{2} D_i + \sum_{i=\ell+1} ^t  \frac{f_i}{2} D_i .
\end{equation*}
If $\alpha$ and $\beta$ are $\mathbb{K}$-linearly dependent then $\mbox{div}(\alpha) = \mbox{div}(\beta)$ and hence $\mbox{div}(x) = \mbox{div}(y)$. This means that $x$ and $y$ are $\mathbb{K}$-linearly dependent and hence $Q = z^2 -xy$ is not irreducible, which is a contradiction. Therefore $\alpha$ and $\beta$ are $\mathbb{K}$-linearly independent and hence $h^0 (X,A) \geq 2$. Now, let $B$ denote the line bundle $\mathcal{O}_X ( D_1 + \cdots + D_s )$ and let $h \in H^0 (X,B)$ be the element corresponding to the divisor $D_1 + \cdots + D_s$. Then we have
\begin{equation*}
\mathcal{O}_X (1) = A^2 \otimes B \quad \mbox{and} \quad Q = Q_{A,B} (\alpha,\beta,h).
\end{equation*}
But $\mathcal{O}_X (1)$ is the line bundle defining the Segre embedding of $\P^{n_1} \times \cdots \times \P^{n_{\ell}}$ and hence it does not admit a factorization of the form $\mathcal{O}_X (1) = A^2 \otimes B$ such that $h^0 (X,A) \geq 2$ and $h^0 (X,B) \geq 1$. This completes the proof that $X$ satisfies no quadratic polynomial of rank $3$.
\end{proof}

\section{The rank index of the Pl\"{u}cker embedding of $\mathbb{G}(1,\P^n )$}
\noindent This section is devoted to proving Theorem \ref{thm:Grassmannian} below.

\begin{theorem}\label{thm:Grassmannian}
Let $X = \mathbb{G} (1,\P^n ) \subset \P^{{n+1 \choose 2}-1}$, $n \geq 3$, be the Pl\"{u}cker embedding of the Grassmannian manifold of lines in $\P^n$. Then
\begin{equation*}
\emph{rank-index}(X) = 6.
\end{equation*}
Furthermore, $I(X)$ has no quadratic polynomials of rank $\leq 5$.
\end{theorem}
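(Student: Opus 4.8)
The plan is to separate the upper bound from the lower bound, with the latter carrying all the content. For the upper bound I will use that $I(X)$ is generated by the Pl\"ucker relations. Writing $V=\mathbb{K}^{n+1}$ and using Pl\"ucker coordinates $p_{ij}$, $0\le i<j\le n$, on $\mathbb{P}(\wedge^2 V)=\mathbb{P}^{\binom{n+1}{2}-1}$, these are
\[ P_{abcd}=p_{ab}p_{cd}-p_{ac}p_{bd}+p_{ad}p_{bc},\qquad 0\le a<b<c<d\le n, \]
each of rank $6$; this gives $\mbox{rank-index}(X)\le 6$. I will also record that, as $\{a,b,c,d\}$ runs over the four-element subsets of $\{0,\dots,n\}$, the relations $P_{abcd}$ have pairwise disjoint monomial supports and hence form a basis of $I(X)_2$, so that the coefficients $\lambda_{abcd}$ appearing below are genuine projective coordinates on $\mathbb{P}(I(X)_2)=\mathbb{P}^t$.

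For the lower bound I will run Proposition \ref{prop:computation of the rank index}. Let $Q=\sum_{a<b<c<d}\lambda_{abcd}P_{abcd}$ be the general quadric in $I(X)_2$ and $M$ its symmetric matrix, whose rows and columns are indexed by the two-element subsets $\{i,j\}$. The structural point is that the $(\{i,j\},\{k,l\})$ entry of $M$ vanishes whenever $\{i,j\}\cap\{k,l\}\ne\emptyset$ (no Pl\"ucker relation contains a product of overlapping pairs), and when the two pairs are disjoint it equals $\pm\tfrac12\lambda_{ijkl}$; thus every entry depends on a single coefficient, the one indexed by the union $\{i,j,k,l\}$.

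The decisive idea is to isolate one four-set at a time. Fixing $S=\{a,b,c,d\}$, I consider the principal $6\times6$ submatrix $M_S$ of $M$ on the six two-subsets of $S$. Every nonzero entry of $M_S$ involves all four of $a,b,c,d$, so $M_S=\lambda_{abcd}\,N_S$, where $N_S$ is the symmetric matrix of the single relation $P_{abcd}$. Since $N_S$ is block-antidiagonal with three invertible $2\times2$ blocks, $\det N_S$ is a nonzero scalar, and hence $\det M_S$ is a nonzero scalar multiple of $\lambda_{abcd}^{6}$. This exhibits, for every four-set $S$, a $6\times6$ minor of $M$ equal up to a unit to $\lambda_{abcd}^{6}$, so $\lambda_{abcd}\in\sqrt{I(6,M)}$. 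As the $\lambda_{abcd}$ exhaust the coordinates of $\mathbb{P}^t$, this forces $\sqrt{I(6,M)}=(x_0,\dots,x_t)$, i.e. $\Phi_5(X)=V(I(6,M))=\emptyset$; equivalently, any nonzero $Q$ has some $\lambda_{abcd}\ne0$ and therefore rank $\ge 6$. This is the ``furthermore'' assertion, and together with the upper bound it yields $\mbox{rank-index}(X)=6$.

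I expect no genuine obstacle once the entrywise description of $M$ is in hand: the localization to a single four-subset turns the minor computation into the already-known rank of one Pl\"ucker relation. The only points needing care are the linear independence of the $P_{abcd}$ (so the $\lambda_{abcd}$ are honest coordinates) and the implicit standing assumption $\operatorname{char}(\mathbb{K})\ne2$, under which the symmetric-matrix formalism applies and $\det N_S\ne0$. The hypothesis $n\ge3$ is used precisely to guarantee that four-element index sets exist, so that $I(X)_2\ne0$ and the statement is non-vacuous.
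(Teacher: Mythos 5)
Your proposal is correct and follows essentially the same route as the paper: both arguments fix a four-element index set, observe that the corresponding principal $6\times 6$ submatrix of the symmetric matrix of a general quadric $Q=\sum\lambda_{abcd}P_{abcd}$ involves only the single coefficient $\lambda_{abcd}$ (since Pl\"ucker relations contain no products of overlapping coordinate pairs), and conclude from the nonvanishing of its determinant, a unit times $\lambda_{abcd}^{6}$, that every coefficient of a rank-$\le 5$ quadric must vanish. Your added remarks on the linear independence of the Pl\"ucker relations and the characteristic-$2$ caveat are sensible refinements but do not change the substance of the argument.
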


\begin{proof}
Let $\{p_{ij} ~|~ 0 \le i < j \le n \}$ be the set of homogeneous coordinates of the projective space $\P^{{n \choose 2}-1}$. Then the homogeneous ideal of $X$ is generated by the following Pl\"{u}cker relations
\begin{equation*}
Q(i,j,k,l) := p_{ij} p_{kl} - p_{ik} p_{jl} + p_{jk} p_{il} ,\quad 0 \le i < j < k < l \le n
\end{equation*}
(cf. \cite[Theorem 8.15]{Muk}). Therefore the rank index of $X$ is at most $6$. Also the proof is completed if $X$ satisfies no quadratic polynomial of rank $\leq 5$.

Now, suppose that
\begin{equation*}
Q = \sum_{0 \le i < j < k < l \le n} x_{i,j,k,l} Q(i,j,k,l)
\end{equation*}
is of rank at most $5$. Then all $6$-minors of the symmetric matrix $M_Q$ associated to $Q$ must vanish. Fix $(i,j,k,l)$ with $0 \le i < j < k < l \le n$. Then the $6 \times 6$ submatrix $N$ of $M_Q$ corresponding to the ordered linear forms $p_{ij} , p_{ik} , p_{il} , p_{jk} , p_{jl} ,  p_{kl}$ is equal to
$$N = \begin{bmatrix}
0           &  0            & 0            & 0            & 0             &  x_{i,j,k,l}  \\
0           & 0             & 0            & 0            &  -x_{i,j,k,l} & 0   \\
0           & 0             & 0            &  x_{i,j,k,l} & 0             &  0  \\
0           & 0             &  x_{i,j,k,l} & 0            & 0             &  0  \\
0           &  -x_{i,j,k,l} & 0            &    0         & 0             &   0    \\
x_{i,j,k,l} & 0             &    0         & 0            &   0           &    0
\end{bmatrix}$$
Since $\det (N) = - x_{i,j,k,l}^6 $ is equal to zero, it holds that $x_{i,j,k,l} = 0$. This completes the proof that $Q = 0$, and hence $I(X)$ has no quadratic polynomials of rank $\leq 5$.
\end{proof}

\section{The rank index of some del Pezzo varieties}
\noindent This section is devoted to investigating the rank index of del Pezzo varieties. Recall that a projective variety $X \subset \P^r$ of codimension $e$ and degree $d$ is called a del Pezzo variety if it is arithmetically Cohen-Macaulay and $d = e+2$. In \cite{EGHP} and \cite{HK1}, it is shown that varieties of minimal degree and del Pezzo varieties are respectively the extremal and next to extremal varieties with respect to property $N_p$. Also, the authors in \cite{HK2} show that varieties of minimal degree and del Pezzo varieties are respectively the extremal and next to extremal varieties with respect to the upper bounds of Betti numbers in the quadratic strand. Comparing with those nice characterizations of del Pezzo varieties, we prove that the rank indices of them are different in each case.

Let $X \subset \P^{n+e}$ be a del Pezzo variety of dimension $n \geq 1$ and codimension $e \geq 2$ which is not a cone. If $n=1$, then $X$ is a linearly normal curve of arithmetic genus $1$ and degree $e+2 \geq 4$. Thus the rank index of $X$ is always equal to $3$ by \cite[Theorem 1.1]{P}. If $e=2$ then $X$ is a complete intersection of two quadrics and hence its rank index can be any number between $3$ and $n$. From now on, we suppose that $n \geq 2$ and $e \geq 3$. We will discuss the rank index of $X$ by dividing del Pezzo varieties into the following three types:\\

\begin{enumerate}
\item[$(i)$] Smooth del Pezzo varieties;
\item[$(ii)$] Non-normal del Pezzo varieties;
\item[$(iii)$] Singular normal del Pezzo varieties.\\
\end{enumerate}

\noindent For cases $(i)$ and $(ii)$, the classification up to projective equivalence is perfectly known (cf. \cite{F} and \cite{LP}). In the next three subsections, we will look at what the values of the rank index are in these two cases.

\subsection{Smooth del Pezzo surfaces} Suppose that $X$ is a smooth surface. Then $X$ is of one of the following two types:\\

\begin{enumerate}
\item[$(1)$] There is a finite set $\Sigma$ on $\P^2$ of $t$ points ($0 \leq t \leq 4$) in linearly general position such that $X \subset \P^{9-t}$ is obtained by the inner projections of the triple Veronese surface $\nu_3 (\P^2 ) \subset \P^9$. We denote this $X$ by $S_t$. For example, $S_0 = \nu_3 (\P^2 )$.
\item[$(2)$] $X$ is the $2$-uple Veronese embedding of the smooth quadric surface $Q$ in $\P^3$. We denote this $X$ by $S_5$.   \\
\end{enumerate}

\begin{theorem}\label{thm:smooth del Pezzo surface}
Let $S_t$, $0 \leq t \leq 5$, be as above. Then
\begin{equation*}
\emph{rank-index}(S_t) = \begin{cases} 3 & \mbox{if} \quad  t=0,5, \quad \mbox{and}\\
                                     4 & \mbox{if} \quad t = 1,2,3,4 \end{cases}
\end{equation*}
\end{theorem}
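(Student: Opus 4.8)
**The plan is to prove the statement by analyzing the six surfaces $S_t$ individually, using the factorization structure of $\mathcal{O}_X(1)$ to handle the rank-$3$ cases and the filtration/minor machinery of Section 2 to rule out property $QR(3)$ in the remaining cases.**

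First I would dispose of the two surfaces with rank index $3$, namely $S_0$ and $S_5$. For $S_0 = \nu_3(\P^2)$, the rank index equals $3$ by the result quoted in the introduction that every Veronese embedding has rank index $3$ when $\mathrm{char}(\mathbb{K}) \ne 2,3$; indeed the $Q$-map construction of Section 2.3 applied to $\mathcal{O}_{\P^2}(3) = \mathcal{O}_{\P^2}(1)^2 \otimes \mathcal{O}_{\P^2}(1)$ produces enough rank-$3$ quadrics, exactly as in Example \ref{ex:2-uple of P3}. For $S_5 = \nu_2(Q)$ with $Q \cong \P^1 \times \P^1$ the smooth quadric surface, the polarizing line bundle is $\mathcal{O}_Q(2,2)$, which factors as $A^2 \otimes B$ with $A = \mathcal{O}_Q(1,1)$ and $B = \mathcal{O}_Q$; since $h^0(Q,A) = 4 \geq 2$ and $h^0(Q,B) = 1 \geq 1$, the $Q$-map yields rank-$3$ quadrics, and I would check that these span $I(S_5)_2$, giving rank index $3$.

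Second, for the four intermediate surfaces $S_1, S_2, S_3, S_4$ I must show the rank index is exactly $4$. The upper bound is the easy direction: each $S_t$ is a del Pezzo surface of minimal degree $e+2$ whose ideal is generated by quadrics, and more to the point these surfaces are obtained by inner projections of $\nu_3(\P^2)$, so their defining ideals carry a determinantal (or near-determinantal) presentation whose $2\times 2$ minors have rank $\leq 4$, giving $\mathrm{rank\text{-}index}(S_t) \leq 4$. The essential content is the lower bound: I must prove that $I(S_t)$ cannot be generated by rank-$3$ quadrics for $t = 1,2,3,4$. My strategy is to invoke Proposition \ref{prop:computation of the rank index}: I would exhibit a nonzero linear form lying in $\sqrt{I(4,M_Q)}$, where $M_Q$ is the symmetric matrix of a general quadric $Q \in I(S_t)_2$. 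Concretely, I would choose coordinates adapted to the projection from $\nu_3(\P^2)$ and locate a small submatrix $N$ of $M_Q$—mimicking the $4\times 4$ block $N$ used in the scroll proof of Theorem \ref{thm:RNS}—whose determinant is a nonzero power of one of the projection coefficients. This forces that coefficient into $\sqrt{I(4,M_Q)}$, so $\Phi_3(S_t)$ fails to span $\P^t$ and property $QR(3)$ fails.

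The main obstacle I anticipate is identifying, for each $t \in \{1,2,3,4\}$, the correct adapted coordinate system and the explicit $4\times 4$ submatrix whose determinant isolates a single linear form. Unlike the scroll case, where the $1$-generic matrix gives the block $N$ for free, here the inner projection mixes the Veronese coordinates and the combinatorics of the $t$ projected points enter the quadratic generators; choosing the point configuration in linearly general position must be used to guarantee the relevant minor is a nonzero monomial in the coefficients rather than something that collapses. A clean alternative that sidesteps most of this casework is to argue via factorization of the line bundle: one shows directly that the polarization $\mathcal{O}_{S_t}(1)$ admits no factorization $A^2 \otimes B$ with $h^0(A) \geq 2$ and $h^0(B) \geq 1$ that accounts for all of $I(S_t)_2$, in the spirit of the Segre and Grassmannian proofs (Theorems \ref{thm:Segre} and \ref{thm:Grassmannian}). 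I would first attempt the explicit-minor route for a uniform treatment across $t = 1,2,3,4$, falling back on the line-bundle obstruction if the minors prove unwieldy for the more heavily projected surfaces $S_3$ and $S_4$.
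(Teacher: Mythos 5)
Your primary route for $t=1,2,3,4$ is exactly the paper's proof: the upper bound comes from explicit quadratic generators of rank $\leq 4$ (in the paper, all binomials or sums of two products of linear forms), and the lower bound comes from Proposition \ref{prop:computation of the rank index} by locating a $4\times 4$ submatrix $N$ of $M_Q$ whose determinant is the fourth power of a single coefficient. The obstacle you anticipate (doing this separately for each $t$) dissolves in the paper by a simple observation: the binomial generator $z_4z_5-z_2z_8$ of $I(S_1)$ lies in $I(S_3)\subset I(S_2)\subset I(S_1)$, so one computation ($\det N=a_7^4$) covers $t=1,2,3$, and a single further minor of the same shape ($\det N=a_4^4$) settles $t=4$. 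For $t=0$ the paper, like you, uses the $Q$-map for $\mathcal{O}_{\P^2}(3)=\mathcal{O}_{\P^2}(1)^2\otimes\mathcal{O}_{\P^2}(1)$ together with a Macaulay2 independence check; for $t=5$ your decomposition $\mathcal{O}(2,2)=\mathcal{O}(1,1)^2\otimes\mathcal{O}$ is a sound variant of the paper's argument (which instead restricts rank-$3$ quadrics of $\nu_2(\P^3)$ to the hyperplane $z_3=z_5$ and adjoins one extra rank-$3$ quadric from a different decomposition). Indeed, in characteristic $\neq 2,3$ your quadrics do span $I(S_5)_2$, because they are precisely the restrictions of the $Q$-map quadrics of $\nu_2(\P^3)$, and restriction maps $I(\nu_2(\P^3))_2$ isomorphically onto $I(S_5)_2$ (both spaces are $20$-dimensional, and no nonzero quadric vanishing on the nondegenerate irreducible variety $\nu_2(\P^3)$ is divisible by $z_3-z_5$); either way a computational spanning check is needed, just as in the paper.

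The genuine flaw is the fallback you reserve for $S_3$ and $S_4$. Realize $S_t$ as the blow-up of $\P^2$ at $t$ general points, with $H$ the pullback of a line and $E_i$ the exceptional classes, so that $\mathcal{O}_{S_t}(1)=3H-E_1-\cdots-E_t$. For $t\leq 3$ this bundle \emph{does} factor as $A^2\otimes B$ with $h^0(A)\geq 2$ and $h^0(B)\geq 1$: take $A=H$, $B=H-E_1-\cdots-E_t$ for $t\leq 2$, and $A=H-E_1$, $B=H+E_1-E_2-E_3$ for $t=3$. Hence $I(S_t)$, $t\leq 3$, contains plenty of rank-$3$ quadrics, namely the images of the corresponding $Q$-maps; what fails is that these span $I(S_t)_2$. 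The Segre-type argument of Theorem \ref{thm:Segre} proves only that $\Phi_3$ is \emph{empty}, i.e.\ nonexistence of rank-$3$ quadrics; it has no mechanism to detect the situation ``rank-$3$ quadrics exist but do not span,'' which is the actual state of affairs for $t=1,2,3$. So had the minors ``proved unwieldy'' for $S_3$ and had you switched to this fallback, the proof would collapse; the minor computation is unavoidable there. For $S_4$, by contrast, the fallback happens to be sound: since $\deg_H B=3-2a\geq 0$ forces $A=H-\sum b_iE_i$, and $h^0(A)\geq 2$ allows at most one $b_i\geq 1$, any admissible $B$ would have to be the class of a line passing through at least three of the four general points, which is not effective; thus no factorization exists and, by the argument of Theorem \ref{thm:Segre}, $I(S_4)$ contains no rank-$3$ quadric at all. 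But this verification appears nowhere in your proposal, so as written the case $t=4$ also still rests on the deferred minor computation.
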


\begin{proof}
All computations in the proof are obtained by means of the Computer Algebra System ``MACAULAY 2" \cite{GS}.

The triple Veronese surface $S_0 \subset \P^9$ is given by the following parametrization:
\begin{equation*}
S_0 = \{ [x_0^3 :x_0^2x_1 :x_0^2x_2 :x_0x_1^2 :x_0x_1x_2: x_0x_2^2:x_1^3 : x_1^2x_2 :x_1x_2^2 :x_2^3 ] ~|~[x_0,x_1,x_2] \in \P^2 \}
\end{equation*}
Note that the homogeneous ideal $I(S_0 )$ of $S_0$ in $\mathbb{K} [z_0,\ldots,z_9]$ is generated by $27$ quadrics. Now, consider the $Q$-map introduced in $\S$ 2.3 for the decomposition $\mathcal{O}_{\P^2} (3) = \mathcal{O}_{\P^2} (1)^2 \otimes \mathcal{O}_{\P^2} (1)$. That is,
\begin{equation*}
Q : H^0 (X,\mathcal{O}_{\P^2} (1) ) \times H^0 (X,\mathcal{O}_{\P^2} (1) ) \times H^0
(X,\mathcal{O}_{\P^2} (1) ) \rightarrow I(S_0 )_2 .
\end{equation*}
Let $\{x_0,x_1,x_2\}$ be a basis of $H^0 (X,\mathcal{O}_{\P^2} (1) )$. Then we can produce the following three finite sets of rank $3$ quadratic polynomials in $I(S_0 )$:
\begin{align*}
\Gamma_{111}&=\{Q(x_i,x_j,x_k) ~|~ 0\leq i <j\leq 2, 0\leq k\leq 2\}\\
\Gamma_{112}&=\{Q(x_i,x_j,x_k+x_l) ~|~ 0\leq i<j\leq 2, 0\leq k<l\leq 2\}\\
\Gamma_{121}&=\{Q(x_i,x_j+x_k,x_l) ~|~ 0\leq i\leq 2, 0\leq\j<k\leq 2, j,k\neq i, 0\leq l\leq 2\}
\end{align*}
Note that $|\Gamma_{111}|=|\Gamma_{112}|=|\Gamma_{121}| =9$. Using Macaulay2, we can check that $$\Gamma_{111}\cup\Gamma_{112}\cup\Gamma_{121}$$
is $\mathbb{K}$-linearly independent and hence a basis of $I(S_0 )_2$. This show that the rank index of $S_0$ is equal to $3$.

The following surface $S_1 \subset \mathbb{P}^8$ is obtained by the inner projection of $S_0 = \nu_3 (\P^2 )$ in $\P^9$ from the point $\nu_3 ([0:0:1])$.
\begin{equation*}
S_1 = \{ [x_0^3 :x_0^2 x_1 :x_0^2 x_2 :x_0x_1^2 : x_0x_1x_2 :x_0x_2^2 :x_1^3 :x_1^2 x_2 :x_1x_2^2 ] ~|~[x_0,x_1,x_2] \in \P^2 \}
\end{equation*}
The homogeneous ideal $I(S_1 )$ of $S_1$ in $\mathbb{K}[z_0 , z_1 , \ldots ,z_8]$ is generated by the following set of $20$ $\mathbb{K}$-linearly independent quadratic polynomials:
\begin{equation*}
\{Q_0,Q_1,\ldots,Q_{19}\}=
\end{equation*}
\begin{equation*}
\{z_7 ^2 - z_6 z_8 , ~z_5 z_7 - z_4 z_8 , ~z_4 z_7 -z_3 z_8 ,~z_2 z_7 - z_1 z_8 ,~z_5 z_6 - z_3 z_8 ,~z_4 z_6 -z_3 z_7 ,~ z_2 z_6 - z_1 z_7 ,
\end{equation*}
\begin{equation*}
~\bold{z_4 z_5 - z_2 z_8} ,~ z_3 z_5 - z_1 z_8 , ~z_1 z_5 - z_0 z_8 , ~z_4 ^2 - z_1 z_8 , ~z_3 z_4 - z_1 z_7 ,~ z_2 z_4 - z_0 z_8 , ~z_1 z_4 - z_0 z_7 ,
\end{equation*}
\begin{equation*}
z_3 ^2 - z_1 z_6 , ~z_2 z_3 - z_0 z_7 , ~z_1 z_3 - z_0 z_6 ,~ z_2 ^2 - z_0 z_5 ,~ z_1 z_2 - z_0 z_4 , ~z_1 ^2 - z_0 z_3\}  \quad \quad \quad \quad  \quad \quad \quad
\end{equation*}
From the above generating set of $I(S_1)$, it follows that the rank index of $S_1$ is at most $4$. From now on, we will show that $\mbox{rank-index}(S_1 ) > 3$. A nonzero quadratic polynomial $Q$ in $I(S_1 )$ is of the form
\begin{equation*}
Q = \sum_{i=0}^{19} a_i Q_i = \sum_{0 \leq i < j \leq 8} \alpha_{i,j} z_i z_j .
\end{equation*}
Thus it holds that
\begin{equation*}
\alpha_{5,5} = \alpha_{5,8} = \alpha_{2,5} = \alpha_{8,8} = 0 \quad \mbox{and} \quad \alpha_{4,5} = - \alpha_{2,8} = a_7 .
\end{equation*}
Let $M$ be the $9 \times 9$ symmetric matrix associated to $Q$. Thus $M_{i,j} = \alpha_{i,j}$ if $i <j$ and $M_{i,i} = 2 \alpha_{i,i}$. Now, consider the following $4 \times 4$ sub-matrix $N$ of $M$:
$$N = \begin{bmatrix}
2 \alpha_{5,5} & \alpha_{5,8}   & \alpha_{2,5}   & \alpha_{4,5}    \\
\alpha_{5,8}   & 2 \alpha_{8,8} & \alpha_{2,8}   & \alpha_{4,8}   \\
\alpha_{2,5}   & \alpha_{2,8}   & 2 \alpha_{2,2} & \alpha_{2,4}    \\
\alpha_{4,5}   & \alpha_{4,8}   & \alpha_{2,4}   & 2 \alpha_{4,4}
\end{bmatrix} = \begin{bmatrix}
0              & 0              & 0              & a_7    \\
0              & 0              & -a_7             & \alpha_{4,8}   \\
0              & -a_7             & 2 \alpha_{2,2} & \alpha_{2,4}    \\
a_7              & \alpha_{4,8}   & \alpha_{2,4}   & 2 \alpha_{4,4}
\end{bmatrix}$$
Then
\begin{equation*}
\det (N) = a_7^4 \in I(4,M) \quad \mbox{and hence} \quad a_7 \in \sqrt{I(4,M)} = I(\Phi_3 (S_2 )).
\end{equation*}
Hence the rank-index of $S_1$ is at least $4$ by Proposition \ref{prop:computation of the rank index}. This completes the proof that the rank index of $S_1$ is equal to $4$.

Let $p_1 = \nu_3 ([0:0:1])$, $p_2 = \nu_3 ([0:1:0])$, $p_3 = \nu_3 ([1:0:0])$ and $p_4 = \nu_3 ([1,1,1])$. For each $1 \leq t \leq 4$, we may assume that $S_t$ is the inner projection of $S_0$ from $p_1,\ldots,p_t$. The homogeneous ideal $I(S_2)$ is generated by the $14$ quadratic polynomials among $\{Q_0,\ldots,Q_{19}\}$ without $z_6$ in its support. Since $z_4z_5-z_2z_8\in I(S_2)\subset I(S_1)$, the above argument is still working. Hence the rank-index of $S_2$ is $4$. Similarly, the homogeneous ideal $I(S_3)$ is generated by $9$ quadratic polynomials among $\{Q_0,\ldots,Q_{19}\}$ without $z_0$ and $z_6$ in its support. Hence $z_4z_5-z_2z_8\in I(S_3)\subset I(S_2)\subset I(S_1)$ and the above argument is still working.

For $t=4$, we need more argument. The homogeneous ideal $I(S_4)$ is generated by $5$ quadratic polynomials, say $Q_0',\ldots,Q_4'$ where
$$Q_0 ' = (z_3-z_4)z_5-(z_2-z_4)z_7,~ Q_1 ' = (z_4-z_5)z_4-(z_2-z_5)z_7, \quad \quad \quad \quad \quad \quad$$
$$Q_2 ' = (z_3-z_5)z_4-(z_1+z_2-z_4-z_5)z_7,~ Q_3 ' = (z_4-z_7)z_2-(z_1-z_5)z_7 ~ \mbox{and} $$
$$Q_4 ' = (z_3-z_7)z_2-(z_1-z_7)z_4 . \quad \quad \quad \quad \quad \quad \quad \quad \quad \quad \quad \quad \quad \quad \quad \quad \quad \quad \quad \quad$$
From the above generating set of $I(S_1)$, it follows that the rank index of $S_4$ is at most $4$.
\begin{equation*}
Q' = \sum_{i=0}^{4} a_i Q_i' = \sum \alpha_{i,j} z_i z_j .
\end{equation*}
Now, consider the following $4 \times 4$ sub-matrix $N$ of $M$:
$$N = \begin{bmatrix}
2 \alpha_{1,1} & \alpha_{1,2}   & \alpha_{1,3}   & \alpha_{1,4}    \\
\alpha_{1,2}   & 2 \alpha_{2,2} & \alpha_{2,3}   & \alpha_{2,4}   \\
\alpha_{1,3}   & \alpha_{2,3}   & 2 \alpha_{3,3} & \alpha_{3,4}    \\
\alpha_{1,4}   & \alpha_{2,4}   & \alpha_{3,4}   & 2 \alpha_{4,4}
\end{bmatrix} = \begin{bmatrix}
0              & 0              & 0              & -a_4    \\
0              & 0              & a_4            & \alpha_{2,4}   \\
0              & a_4             & 2 \alpha_{3,3} & \alpha_{3,4}    \\
-a_4          & \alpha_{2,4}   & \alpha_{3,4}   & 2 \alpha_{4,4}
\end{bmatrix}$$
Then
\begin{equation*}
\det (N) = a_4^4 \in I(4,M_{Q'}) \quad \mbox{and hence} \quad a_4 \in \sqrt{I(4,M_{Q'})} = I(\Phi_3 (S_4 )).
\end{equation*}
Hence the rank-index of $S_4$ is at least $4$ by Proposition \ref{prop:computation of the rank index}. This completes the proof that the rank index of $S_4$ is equal to $4$

Finally, we consider $S_5$ which is equal to $\nu_2 (Q)$ where $Q \subset \P^3$ is the smooth quadric surface $V(x_0 x_3 - x_1 x_2)$. Note that $S_5$ is hyperplane section of
\begin{equation*}
Y = \nu_2(\mathbb{P}^3) = \{ [x_0^2 :x_0 x_1 :x_0 x_2 :x_0 x_3: x_1^2 :x_1x_2 :x_1x_3 : x_2^2 : x_2x_3 :x_3^2 ] ~|~[x_0,x_1,x_2] \in \P^2 \}
\end{equation*}
by $z_3 - z_5$ where $z_0 , z_1 , \ldots , z_9$ are the homogeneous coordinates of $\P^9$. Let $Q'$ be the restriction of $z_3 z_5 - z_2 z_6$ in $I(Y)$ to $\P^8 = V(z_3 -z_5 )$. Thus $Q'$ is of rank $3$ and contained in $I(S_5 )$. Now, let $\Gamma '$ be the restriction of $\Gamma$ in Example \ref{ex:2-uple of P3}. Then one can show that the $\mathbb{K}$-vector space spanned by $\Gamma' \cup \{Q' \}$ is of dimension $20$. This implies that $I(S_5 )$ is generated by $\Gamma' \cup \{Q' \}$ and hence $S_5$ satisfies property $\textsf{QR}(3)$.
\end{proof}

\begin{remark}
Suppose that $\mbox{char}(\mathbb{K}) \neq 2,3$. Then the assertion that $\mbox{rank-index}(S_t) = 3$ for $t = 0,5$ comes directly from \cite[Theorem 1.1 and Corollary 5.2]{HLMP}.
\end{remark}

\subsection{Smooth del Pezzo varieties of dimension $\geq 3$}
Suppose that $X$ is a del Pezzo manifold of dimension $n \geq 3$. Then $X$ is of one of the following types:\\

\begin{enumerate}
\item[$(1)$] ($d=8$) $X$ is the second Veronese variety $\nu_2 (\P^3 ) \subset \P^9$;
\item[$(2)$] ($d=7$) $X$ is the inner projection of $\nu_2 (\P^3 ) \subset \P^9$;
\item[$(3)$] ($d=6$) $X = \P^2 \times \P^2 \subset \P^8$ or $\P_{\P^2} (\mathcal{T}_{\P^2}) \subset \P^7$ or $\P^1 \times \P^1 \times \P^1 \subset \P^7$;
\item[$(4)$] ($d=5$) $X$ is a linear section of $\mathbb{G} (1,\mathbb{P}^4) \subset \P^9$.\\
\end{enumerate}
For details, we refer the reader to see \cite[Section 8]{F}. From now on, we study the rank index of $X$ for the above four cases in turn.

\renewcommand{\descriptionlabel}[1]%
             {\hspace{\labelsep}\textrm{#1}}
\begin{description}
\setlength{\labelwidth}{13mm}
\setlength{\labelsep}{1.5mm}
\setlength{\itemindent}{0mm}

\item[$(1)$] For $X = \nu_2 (\P^3 ) \subset \P^9$, it holds that
\begin{equation*}
\mbox{rank-index}(X) = \begin{cases} 3 & \mbox{if} \quad  \mbox{char}(\mathbb{K}) \neq 2,3, \quad \mbox{and}\\
                                     4 & \mbox{if} \quad  \mbox{char}(\mathbb{K}) = 3. \end{cases}
\end{equation*}
For the proof, see \cite[Theorem 1.1 and 1.2]{HLMP}.
\smallskip

\item[$(2)$] Suppose that $X$ is the inner projection of $\nu_2 (\P^3 ) \subset \P^9$ from a point $p \in \nu_2 (\P^3 )$. We may assume that $p = \nu_2 ([0:0:1])$ and hence $X$ is as follows:
\begin{equation*}
X = \{[x^2 :xy:xz:xw:y^2 : yz:yw:z^2 :zw] ~|~ [x:y:z:w] \in \P^3 \} \subset \P^8
\end{equation*}
Then $I(X)$ is generated by the following $14$ quadratic polynomials:
\begin{equation*}
z_6 z_7 - z_5 z_8 ,~z_3 z_7 - z_2 z_8 ,~z_5 z_6 - z_4 z_8 ,~z_2 z_6 - z_1 z_8 ,~ z_5 ^2 - z_4 z_7 ,
\end{equation*}
\begin{equation*}
z_3 z_5 - z_1 z_8 ,~z_2 z_5 - z_1 z_7 ,~z_3 z_4 - z_1 z_6 ,~ z_2 z_4 - z_1 z_5 ,~z_2 z_3 - z_0 z_8 ,
\end{equation*}
\begin{equation*}
 z_1 z_3 - z_0 z_6 , ~z_2 ^2 - z_0 z_7 ,~ z_1 z_2 - z_0 z_5 , ~ z_1 ^2 - z_0 z_4 \quad \quad \quad  \quad \quad \quad  \quad
\end{equation*}
Then one can prove that $\mbox{rank-index}(X) = 4$ in the same way as in the proof of Theorem \ref{thm:smooth del Pezzo surface} for $S_1$.
\smallskip

\item[$(3)$] If $X$ is $\P^2 \times \P^2$ or $\P^1 \times \P^1 \times \P^1$, then $\mbox{rank-index}(X) = 4$ by Theorem \ref{thm:Segre}. Also $X = \P_{\P^2} (\mathcal{T}_{\P^2}) \subset \P^7$ is obtained as a general hyperplane section of the Segre variety $\P^2 \times \P^2 \subset \P^8$. More precisely, we may assume that $X = \P^2 \times \P^2 \cap V(z_0 +z_4 +z_8 )$ and so $I(X)$ in $\mathbb{K} [z_0 ,z_1 , \ldots , z_7]$ is generated by the following $9$ quadratic polynomials, say $Q_1 , \ldots , Q_9$:
\begin{equation*}
\quad \quad \quad z_0 z_4 + z_4 ^2 + z_5 z_7 ,~z_0 ^2 +z_0 z_4 + z_2 z_6 ,~z_0 z_1 + z_1 z_4 + z_2 z_7 ,~z_0 z_3 + z_3 z_4 + z_5 z_6 ,
\end{equation*}
\begin{equation*}
z_4 z_6 - z_3 z_7 ,~z_1 z_6 - z_0 z_7 , ~z_2 z_4 - z_1 z_5 , ~z_2 z_3 - z_0 z_5 ,~ z_1 z_3 - z_0 z_4 \quad \quad
\end{equation*}
Since $\P^2 \times \P^2 \subset \P^8$ is arithmetically Cohen-Macaulay and of rank index $4$, it follows that the rank index of $X$ is at most $4$. Now, let $M$ be the $8 \times 8$ symmetric matrix associated to $Q = x_1 Q_1 + \cdots + x_9 Q_9$, and write
\begin{equation*}
Q = \sum_{0 \le i \le j \le 7} \alpha_{i,j} z_i z_j .
\end{equation*}
Then $M$ has the following $4 \times 4$ sub-matrix $N$:
$$N = \begin{bmatrix}
2 \alpha_{3,3} & \alpha_{3,4}   & \alpha_{3,6}   & \alpha_{3,7}    \\
\alpha_{3,4}   & 2 \alpha_{4,4} & \alpha_{4,6}   & \alpha_{4,7}   \\
\alpha_{3,6}   & \alpha_{4,6}   & 2 \alpha_{6,6} & \alpha_{6,7}    \\
\alpha_{3,7}   & \alpha_{4,7}   & \alpha_{6,7}   & 2 \alpha_{7,7}
\end{bmatrix} = \begin{bmatrix}
0              & x_4              & 0              & -x_5    \\
x_4            & 2x_1             & x_5            & 0   \\
0              & x_5              & 0              & 0    \\
-x_5           & 0                & 0              & 0
\end{bmatrix}$$
In particular, we have
\begin{equation*}
\det (N) = x_5 ^4 \in I(4,M) \quad \mbox{and hence} \quad x_5 \in \sqrt{I(4,M)} = I(\Phi_3 (X )).
\end{equation*}
Hence the rank-index of $X$ is at least $4$ by Proposition \ref{prop:computation of the rank index}. This completes the proof that the rank index of $X$ is equal to $4$.
\smallskip

\item[$(4)$] If $X = \mathbb{G} (1,\mathbb{P}^4) \subset \P^9$, then $\mbox{rank-index}(X) = 6$ by Theorem \ref{thm:Grassmannian}. Now, let
$$X_k \subset \P^{k+3} \quad (3 \leq k \leq 5)$$
be a smooth $k$-dimensional linear section of $\mathbb{G} (1,\mathbb{P}^4) \subset \P^9$. Note that $X_k$ is unique up to projective equivalence for each $3 \leq k \leq 5$ (cf. \cite[Remark 3.3.2]{PS}). Thus it is enough to check the rank index of any smooth $k$-dimensional linear section for each $3 \leq k \leq 5$.

\begin{theorem}\label{thm:smooth del Pezzo degree 5}
Let $X_k \subset \P^{k+3} \quad (3 \leq k \leq 5)$ be as above. Then
$$\emph{rank-index}(X_k)  = \begin{cases} 6 & \mbox{if} \quad $k=4,5$, \quad \mbox{and}\\
                                          5 & \mbox{if} \quad $k=3$. \end{cases}$$
\end{theorem}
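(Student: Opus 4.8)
The plan is to place the three varieties in the chain of general hyperplane sections
$$X_6 = \G(1,\P^4) \supset X_5 \supset X_4 \supset X_3,$$
all of which are codimension $3$ del Pezzo varieties and hence arithmetically Cohen--Macaulay. Restricting a quadric to a hyperplane never raises its rank, and by the Cohen--Macaulay property the restrictions of a set of quadratic generators of $I(X_{k+1})$ again generate $I(X_k)$ (exactly as in the proof of Proposition \ref{prop:hyperplane section}). Hence the rank index is non-increasing along the chain:
$$6 = \mbox{rank-index}(X_6) \geq \mbox{rank-index}(X_5) \geq \mbox{rank-index}(X_4) \geq \mbox{rank-index}(X_3).$$
It therefore suffices to establish the three inequalities $\mbox{rank-index}(X_4) \geq 6$, $\mbox{rank-index}(X_3) \geq 5$ and $\mbox{rank-index}(X_3) \leq 5$: the first, together with the bound $\leq 6$ inherited from $X_6$, forces $\mbox{rank-index}(X_5) = \mbox{rank-index}(X_4) = 6$, while the last two pin down $X_3$.

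For the two lower bounds I would proceed as in the proof of Theorem \ref{thm:Grassmannian}. After fixing explicit generic hyperplanes I would compute a basis $Q_0, \ldots, Q_4$ of $I(X_4)_2$ (respectively $I(X_3)_2$), a space which is five-dimensional by the Cohen--Macaulay property and the five Pl\"ucker relations on $X_6$, by restricting those five relations. Forming the symmetric matrix $M$ of the general quadric $Q = x_0 Q_0 + \cdots + x_4 Q_4$, I would then exhibit a $6 \times 6$ submatrix $N$ of $M$ (respectively a $5 \times 5$ submatrix) whose entries are arranged so that $\det N = \pm\, x_j^6$ (respectively $\pm\, x_j^5$) for a single coordinate $x_j$. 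This places $x_j$ in $\sqrt{I(6,M)}$ (respectively $\sqrt{I(5,M)}$), so Proposition \ref{prop:computation of the rank index} yields $\mbox{rank-index}(X_4) \geq 6$ and $\mbox{rank-index}(X_3) \geq 5$. The only real work here is to choose the hyperplanes so that the section remains smooth and so that the relevant block of $M$ depends on a single coordinate.

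The heart of the theorem is the upper bound $\mbox{rank-index}(X_3) \leq 5$, which is precisely the point at which the rank index drops from $6$ to $5$ and so cannot be deduced from $X_4$ by the monotonicity above. For this I would, using Macaulay2 as in the proof of Theorem \ref{thm:smooth del Pezzo surface}, produce an explicit generating set of $I(X_3)$ consisting of quadrics of rank at most $5$, and then verify that five such rank-$5$ quadrics are $\mathbb{K}$-linearly independent and hence form a basis of the five-dimensional space $I(X_3)_2$. Since the restricted Pl\"ucker generators have rank $6$ in general, the existence of a rank-$5$ basis reflects the special geometry of the quintic del Pezzo threefold rather than any formal property of hyperplane sections. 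I expect this to be the main obstacle: the lower bounds reduce to a single minor computation each, whereas the upper bound requires genuinely exhibiting the rank-$5$ generators and is the one place where the numerical drop from $6$ to $5$ is actually forced.
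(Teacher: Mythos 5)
Your proposal is correct, and its computational core is the same as the paper's: restrict the five Pl\"ucker relations to explicit smooth linear sections, form the symmetric matrix $M_Q$ of a general element $Q$ of $I(X_k)_2$, and decide the rank index via Proposition \ref{prop:computation of the rank index} with Macaulay2. The organization, however, differs in a useful way. The paper fixes three explicit hyperplanes ($p_{34}-p_{01}-p_{02}$, $p_{24}-p_{03}-p_{04}$, $p_{23}-p_{12}-p_{13}-p_{14}$) and runs the radical computation separately for each of $X_5$, $X_4$, $X_3$; for instance it finds $\sqrt{I(6,M_{Q^{(1)}})}=(x_{0123}^{(1)},x_{0124}^{(1)},x_{0234}^{(1)}-x_{0134}^{(1)},x_{1234}^{(1)})$ for $X_5$, whose linear forms force $\mbox{rank-index}(X_5)=6$, and then says the cases $X_4$, $X_3$ go ``by the same way.'' Your monotonicity along the chain of sections --- restriction of a quadric to a hyperplane never raises its rank, and by ACM-ness the restricted generators generate --- is sound (it is exactly the argument the paper uses for scrolls in Theorem \ref{thm:RNS}, though not invoked here), and it genuinely buys something: the computation for $X_5$ becomes unnecessary, since its rank index is squeezed between $\mbox{rank-index}(X_4)\geq 6$ and $\mbox{rank-index}(\G(1,\P^4))=6$, and the logical structure makes transparent that only three inequalities need checking. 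Two caveats on the remaining computations. First, your hope that a single $6\times 6$ (resp.\ $5\times 5$) submatrix will have determinant $\pm x_j^6$ (resp.\ $\pm x_j^5$) is optimistic: the clean block structure exploited in the proof of Theorem \ref{thm:Grassmannian} is destroyed by the substitutions such as $p_{34}=p_{01}+p_{02}$, and the paper's own lower bounds here come from a full radical computation rather than one clean minor; be prepared to fall back on that. Second, for the upper bound $\mbox{rank-index}(X_3)\leq 5$ your plan of exhibiting an explicit basis of $I(X_3)_2$ by quadrics of rank at most $5$ is valid but more laborious than needed: by the equivalence $(i)\Leftrightarrow(iii)$ in the proof of Proposition \ref{prop:computation of the rank index}, it suffices to check computationally that $\sqrt{I(6,M_Q)}$ for $X_3$ contains no nonzero linear form, which is the paper's (implicit) route and requires no hunt for special generators.
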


\begin{proof}
Following the notation in Theorem \ref{thm:Grassmannian}, let $\{p_{ij} ~|~ 0 \le i < j \le 4 \}$ be the set of homogeneous coordinates of the projective space $\P^9$. Then the homogeneous ideal of $\mathbb{G}(1,\mathbb{P}^4)$ is generated by the following five Pl\"{u}cker relations
\begin{equation*}
Q(i,j,k,l) = p_{ij} p_{kl} - p_{ik} p_{jl} + p_{jk} p_{il}  \quad (0 \le i < j < k < l \le 4).
\end{equation*}
Now, let $H_1$, $H_2$ and $H_3$ be hyperplanes of $\P^9$ which are defined respectively by
\begin{equation*}
p_{34}-p_{01}-p_{02} , ~ p_{24}-p_{03}-p_{04} \quad \mbox{and} \quad p_{23}-p_{12}-p_{13}-p_{14} .
\end{equation*}
Then one can check that $X_5 := \mathbb{G} (1,\mathbb{P}^4) \cap H_1$, $X_4 := X_5 \cap H_2$ and $X_3 := X_4 \cap H_3$ are all irreducible and smooth. Furthermore, we can find the rank index of $X_k$ using the restrictions of the above five Pl\"{u}cker relations to the projective spaces $\langle X_k \rangle$ (cf. Proposition \ref{prop:computation of the rank index}). For example, the homogeneous ideal of $X_5$ is generated by $\{Q^{(1)}(i,j,k,l)\}_{0\leq i<j<k<l\leq 4}$ which is the substitution of the Pl\"{u}cker relations by $p_{34}=p_{01}+p_{02}$.
Now, let
\begin{equation*}
Q^{(1)} = \sum_{0 \le i < j < k < l \le 4} x^{(1)}_{i,j,k,l} Q^{(1)}(i,j,k,l)
\end{equation*}
be a quadric element in the homogeneous ideal of $X_5$. Using Macaulay2, we can show that
$$\sqrt{I(6,M_{Q^{(1)}})}=(x_{0123}^{(1)},x_{0124}^{(1)},x_{0234}^{(1)}-x_{0134}^{(1)},x_{1234}^{(1)}).$$
Since it contains a linear form, we conclude that the rank-index of $X_5$ is strictly bigger than $5$ and hence is equal to $6$. By the same way, we can show that the rank indices of $X_4$ and $X_3$ are respectively $6$ and $5$.
\end{proof}

\begin{remark}
Theorem \ref{thm:smooth del Pezzo degree 5} is an interesting example of how the rank index can be changed by the hyperplane section. Let $X_6 := \mathbb{G} (1,\P^4 )$ and let $X_2$ be a general hyperplane section of $X_3$. Thus $X_2$ is projectively equivalent to $S_4$ which is defined in the previous subsection. Also let $X_1$ be a general hyperplane section of $X_2$. Thus $X_1 \subset \P^4$ is an elliptic normal curve. Then it follows by \cite[Theorem 1.1]{P}, Theorem \ref{thm:smooth del Pezzo surface} and Theorem \ref{thm:smooth del Pezzo degree 5} that
$$\mbox{rank-index}(X_k)  = \begin{cases} 6 & \mbox{if} \quad $k=4,5,6$,  \\
                                          5 & \mbox{if} \quad $k=3$, \\
                                          4 & \mbox{if} \quad $k=2$, \quad \mbox{and} \\
                                          3 & \mbox{if} \quad $k=1$. \\ \end{cases}$$
\end{remark}
\end{description}

\subsection{Non-normal del Pezzo varieties of dimension $\geq 2$}
\noindent Let $X \subset \P^{n+e}$ be an $n$-dimensional non-normal del Pezzo variety which is not a cone, where $n \geq 2$ and $e \geq 3$. First we quickly review the known classification results of $X$.

\begin{notation and remark}
Let $X \subset \P^{n+e}$ be as above. Then
\smallskip

\renewcommand{\descriptionlabel}[1]%
             {\hspace{\labelsep}\textrm{#1}}
\begin{description}
\setlength{\labelwidth}{13mm}
\setlength{\labelsep}{1.5mm}
\setlength{\itemindent}{0mm}

\item[{\rm (1)}] By \cite[Theorem 6.2]{BP}, there is a smooth rational normal scroll $\tilde{X} \subset \P^{n+e+1}$ of dimension $\leq 3$ such that $X$ is the linear projection of $\tilde{X}$ from a point. More precisely, if $n=2$ then $X = \pi_p (\tilde{X})$ where
\begin{equation*}
\quad \quad \quad \tilde{X} = S(1,e+1) \mbox{ and $p \in \langle S(1) , \mathbb{L} \rangle - S(1) \cup \mathbb{L}$ for a ruling $\mathbb{L}$ of $S(1,e+1)$}
\end{equation*}
or else
\begin{equation*}
\tilde{X} = S(2,e) \quad \mbox{and $p \in \langle S(2)   \rangle - S(2)$}.
\end{equation*}
Also if $n=3$ then $X = \pi_q (\tilde{X})$ where
\begin{equation*}
\tilde{X} = S(1,1,e) \quad \mbox{and $q \in \langle S(1,1)  \rangle - S(1,1)$.}
\end{equation*}
Note that a general hyperplane section of $\pi_q ( S(1,1,e)) \subset \P^{e+3}$ is projectively equivalent to $\pi_p (S(2,e)) \subset \P^{e+2}$.

\item[{\rm (2)}] Let $R = \mathbb{K} [z_0 , \ldots ,z_{e+2} ]$ be the homogeneous coordinate ring of $\P^{e+2}$ and let
$$A_1 = \{ Q_{i,j} := z_i z_{j+1} - z_{i+1} z_j ~|~ 2 \leq i < j \leq e+1 \}$$
be the set of all $2 \times 2$ minors of the matrix
$\begin{bmatrix}
z_{2} & z_3   & \cdots & z_{e+1} \\
z_3   & z_{4} & \cdots & z_{e+2}
\end{bmatrix}$.
Also let
\begin{equation*}
B_1 = \{ F_i := z_2 z_{i-1} + z_1 z_{i} - z_0 z_{i+1} ~|~ 3 \leq i \leq e+1 \}
\end{equation*}
and
\begin{equation*}
B_2 = \{ G_i := z_1 z_{i-1} - z_0 z_{i+1} ~|~ 3 \leq i \leq e+1 \}.
\end{equation*}
Then $A_1 \cup B_1$ and $A_1 \cup B_2$ are $\Bbbk$-linearly independent. Also the homogeneous ideals $\wp_1 := \langle A_1 , B_1 \rangle$ and $\wp_2 := \langle A_1 , B_2 \rangle$ of $R$ are prime, and
\begin{equation*}
{S}_1 := V(\wp_1 ) \subset \P^{e+2} \quad \mbox{and} \quad {S}_2 := V(\wp_2 ) \subset \P^{e+2}
\end{equation*}
are non-normal del Pezzo surfaces. Furthermore, ${S}_1$ and ${S}_2$ are respectively projectively equivalent to $\pi_p (S(1,d-1))$ and $\pi_p (S(2,d-2))$. For details, we refer the reader to \cite[Proposition 3.5 and 3.6]{LP} and \cite[Theorem 4.6 and 4.7]{LP}.

\item[{\rm (3)}] Let $R = \mathbb{K} [z_0 , \ldots ,z_{e+2} , z_{e+3} ]$ be the homogeneous coordinate ring of $\P^{e+3}$ and let
$$A_2 = \{ Q_{i,j} := z_i z_{j+1} - z_{i+1} z_j ~|~ 3 \leq i < j \leq e+2 \}$$
be the set of all $2 \times 2$ minors of the matrix
$\begin{bmatrix}
z_{3} & z_4   & \cdots & z_{e+2} \\
z_4   & z_{5} & \cdots & z_{e+3}
\end{bmatrix}$. Also let
\begin{equation*}
C = \{ H_i := z_1 z_{i-1} + z_0 z_{i} - z_2 z_{i+1} ~|~ 4 \leq i \leq e+2 \}.
\end{equation*}
Then $A_2 \cup C$ is $\Bbbk$-linearly independent. Also the homogeneous ideals $\wp_3 := \langle A_2, C \rangle$ of $R$ is prime, and
\begin{equation*}
T := V(\wp_3 ) \subset \P^{e+3}
\end{equation*}
is a non-normal del Pezzo threefold. Furthermore, $T$ is projectively equivalent to $\pi_p (S(1,1,e+1))$. For details, we refer the reader to \cite[Proposition 3.8 and Theorem 4.8]{LP}.
\end{description}
\end{notation and remark}

\begin{theorem}\label{thm:non-normal del Pezzo}
Let ${S}_1 ,{S}_2 \subset \P^{e+2}$ and $T  \subset \P^{e+3}$ be as above. Then
\smallskip

\renewcommand{\descriptionlabel}[1]%
             {\hspace{\labelsep}\textrm{#1}}
\begin{description}
\setlength{\labelwidth}{13mm}
\setlength{\labelsep}{1.5mm}
\setlength{\itemindent}{0mm}

\item[{\rm (1)}] $\emph{rank-index}(S_1) = 5$.
\item[{\rm (2)}] $\emph{rank-index}({S}_2 ) = 4$.
\item[{\rm (3)}] $\emph{rank-index}(T) = 6$.
\end{description}
\end{theorem}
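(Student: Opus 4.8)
The plan is to prove each of the three assertions by pinning the rank index between matching upper and lower bounds. Every upper bound will come either from inspecting the given quadratic generators or from the hyperplane-section estimate of Proposition~\ref{prop:hyperplane section}, and every lower bound from Proposition~\ref{prop:computation of the rank index}: writing a general quadric $Q\in I(X)_2$ as a combination of the listed generators and letting $M$ be its associated symmetric matrix, I will exhibit an explicit square submatrix $N$ of $M$ whose determinant is a perfect power $\pm\ell^{\,m}$ of a single coefficient $\ell$. Such a power is an $m\times m$ minor and so lies in $I(m,M)$; hence the nonzero linear form $\ell$ lies in $\sqrt{I(m,M)}$, and the rank index exceeds $m-1$.

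For the upper bounds I proceed case by case. For $S_2$ the members of $A_1$ are $2\times 2$ minors, hence of rank $\le 4$, while each $G_i=z_1z_{i-1}-z_0z_{i+1}$ is a difference of two products of distinct variables and so has rank $4$; therefore $\mbox{rank-index}(S_2)\le 4$. For $T$, the preceding Notation and Remark records that a general hyperplane section of $T$ is projectively equivalent to $S_2$; since $T$ is a del Pezzo variety, hence arithmetically Cohen-Macaulay with $\mathrm{depth}\ge 2$, and since $\mbox{rank-index}(S_2)\le 4$, Proposition~\ref{prop:hyperplane section} gives $\mbox{rank-index}(T)\le 6$. The surface $S_1$ is the subtle case: for $i\ge 4$ the generator $F_i=z_2z_{i-1}+z_1z_i-z_0z_{i+1}$ involves six distinct variables and hence has rank $6$, so the bound $\le 5$ cannot be read off the displayed generators. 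Instead I pass to a general hyperplane section $\mathcal{C}$ of $S_1$, which is a linearly normal curve of arithmetic genus $1$ and degree $e+2\ge 5$; by \cite[Theorem~1.1]{P} any such curve has rank index $3$, and Proposition~\ref{prop:hyperplane section} then yields $\mbox{rank-index}(S_1)\le 5$.

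For the lower bounds I first record that in all three ideals the monomials in the lowest-index extra coordinates (such as $z_0^2$, $z_1^2$, $z_0z_1$, and for $T$ also those involving $z_2$) never occur, together with the lowest cross terms, so the corresponding entries of $M$ vanish. The block of $M$ indexed by these extra coordinates and the first few scroll variables is then almost anti-diagonal, with a single coefficient surviving along the anti-diagonal. A direct expansion gives: for $S_2$, the $4\times 4$ block on $z_0,z_1,z_2,z_4$ with $\det N=b_3^{\,4}$, where $b_3$ is the coefficient of $G_3$; for $S_1$, the $5\times 5$ block on $z_0,z_1,z_2,z_3,z_4$ with $\det N=2\beta_3^{\,5}$, where $\beta_3$ is the coefficient of $F_3$ and the square term $z_2^2$ of $F_3$ supplies the one diagonal entry; and for $T$, the $6\times 6$ block on $z_0,z_1,z_2,z_3,z_4,z_5$ with $\det N=-\gamma_4^{\,6}$, where $\gamma_4$ is the coefficient of $H_4$. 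In each case the determinant is independent of the remaining entries of the block, so the computation is uniform in $e\ge 3$. Thus $b_3\in\sqrt{I(4,M)}$, $\beta_3\in\sqrt{I(5,M)}$ and $\gamma_4\in\sqrt{I(6,M)}$ are nonzero linear forms, and Proposition~\ref{prop:computation of the rank index} gives $\mbox{rank-index}(S_2)>3$, $\mbox{rank-index}(S_1)>4$ and $\mbox{rank-index}(T)>5$. Combined with the upper bounds this proves $(1)$, $(2)$ and $(3)$.

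The main obstacle is the upper bound for $S_1$. Because the natural generators $F_i$ have rank $6$, there is no evident rank-$\le 5$ generating set to write down, so the estimate must be obtained indirectly: one has to recognize that a general hyperplane section is a genus-one curve and invoke the curve result \cite[Theorem~1.1]{P} together with Proposition~\ref{prop:hyperplane section}, which lifts rank-$3$ equations of $\mathcal{C}$ to rank-$\le 5$ equations of $S_1$. The rest is bookkeeping; the only real care needed in the lower bounds is the choice of the corner block, after which the determinant collapses to a pure power of one coefficient. One should also verify the characteristic hypothesis, since the $S_1$ determinant carries a factor $2$ coming from the square term $z_2^2$ of $F_3$, so the argument as written requires $\mathrm{char}\,\mathbb{K}\neq 2$, whereas the $S_2$ and $T$ determinants are factor-free and valid in every characteristic.
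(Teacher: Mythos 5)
Your proof is correct and, in substance, the same as the paper's: the lower bounds use exactly the same corner-block determinants ($\det N=b_3^4$ on $z_0,z_1,z_2,z_4$ for $S_2$, $\det N=2b_3^5$ on $z_0,\dots,z_4$ for $S_1$, $\det N=-b_4^6$ on $z_0,\dots,z_5$ for $T$) fed into Proposition~\ref{prop:computation of the rank index}, and the upper bound for $S_1$ is obtained exactly as in the paper, via the genus-one hyperplane section, \cite[Theorem 1.1]{P} and Proposition~\ref{prop:hyperplane section}. The one genuine divergence is the upper bound for $T$: you section down to $S_2$ and apply Proposition~\ref{prop:hyperplane section}, whereas the paper simply notes that each generator $H_i=z_1z_{i-1}+z_0z_i-z_2z_{i+1}$ is a sum of three products of distinct linear forms and hence has rank at most $6$; the paper's observation is more economical and does not lean on the projective equivalence between a general hyperplane section of $T$ and $S_2$ (that equivalence is recorded in the Notation and Remark, so your route is legitimate too). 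Two side remarks in your favor: placing $-b_4^6$ in $I(6,M)$, so that $b_4\in\sqrt{I(6,M)}=I(\Phi_5(T))$, corrects what is evidently a typo in the paper (which writes $I(5,M)$ at that step); and your characteristic-$2$ caveat for $S_1$ is a fair one, since the paper's own determinant $2b_3^5$ vanishes identically in characteristic $2$, so the paper's proof of $\mbox{rank-index}(S_1)\geq 5$ carries the same implicit hypothesis that neither argument resolves.
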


\begin{proof}
$(1)$ In order to prove that $\mbox{rank-index}( {S}_1 ) \geq 5$, it needs to find a nonzero linear form in the homogeneous ideal of $\Phi_4 ({S}_1 )$ (cf. Proposition \ref{prop:computation of the rank index}). Let
\begin{equation*}
Q = \sum_{0 \leq i < j \leq e+2} \alpha_{i,j} z_i z_j
\end{equation*}
be a nonzero element of $I({S}_1)$. It can be written as
\begin{equation*}
Q = \sum_{2 \leq i < j \leq e+1} a_{i,j} Q_{i,j} + \sum_{3 \leq i \leq e+1} b_i F_i
\end{equation*}
for some $a_{i,j}$'s and $b_i$'s in $\mathbb{K}$. Using this second expression, one can easily check that
\begin{equation*}
\alpha_{i,j} = 0 \quad \mbox{whenever} \quad i+j \leq 3.
\end{equation*}
Let $M$ be the $(e+3) \times (e+3)$ symmetric matrix associated to $Q$. Thus $M_{i,j} = \alpha_{i,j}$ if $i <j$ and $M_{i,i} = 2 \alpha_{i,i}$. Now, consider the following $5 \times 5$ sub-matrix $N$ of $M$:
$$N = \begin{bmatrix}
2 \alpha_{0,0} & \alpha_{0,1}   & \alpha_{0,2}   & \alpha_{0,3}   & \alpha_{0,4} \\
\alpha_{0,1}   & 2 \alpha_{1,1} & \alpha_{1,2}   & \alpha_{1,3}   & \alpha_{1,4} \\
\alpha_{0,2}   & \alpha_{1,2}   & 2 \alpha_{2,2} & \alpha_{2,3}   & \alpha_{2,4}\\
\alpha_{0,3}   & \alpha_{1,3}   & \alpha_{2,3}   & 2 \alpha_{3,3} & \alpha_{3,4}\\
\alpha_{0,4}   & \alpha_{1,4}   & \alpha_{2,4}   & \alpha_{3,4}   & 2 \alpha_{4,4}
\end{bmatrix} = \begin{bmatrix}
0   & 0            & 0    & 0   & -b_3 \\
0   & 0            & 0    & b_3 & \alpha_{1,4} \\
0   & 0            & 2b_3 & \alpha_{2,3}   & \alpha_{2,4}\\
0   & b_3          & \alpha_{2,3}   & 2 \alpha_{3,3} & \alpha_{3,4}\\
-b_3 & \alpha_{1,4} & \alpha_{2,4}   & \alpha_{3,4}   & 2 \alpha_{4,4}
\end{bmatrix}$$
Then
\begin{equation*}
\det (N) = 2b_3 ^5 \in I(5,M) \quad \mbox{and hence} \quad b_3 \in \sqrt{I(5,M)} = I(\Phi_4 (  {S}_1 )).
\end{equation*}
Therefore the rank index of $  {S}_1$ is at least $5$. To prove that $  {S}_1$ is at most $5$, we consider a general hyperplane section $\mathcal{C}$ of $S_1$. Note that $\mathcal{C} \subset \P^{d-1}$ is a linearly normal curve of arithmetic genus $1$. Then it holds by \cite[Theorem 1.1]{P} that $\mbox{rank-index}(\mathcal{C}) = 3$. Then we get the desired inequality
$$\mbox{rank-index}(S_1 ) \leq \mbox{rank-index}(\mathcal{C}) +2 =5$$
by Proposition \ref{prop:hyperplane section}. This completes the proof that $\mbox{rank-index}(   {S}_1 ) = 5$. \smallskip
\smallskip

\noindent $(2)$ Since $I(  {S}_2 ) = \wp_2$ is generated by $A_1 \cup B_2$ and the rank of elements in $A_1 \cup B_2$ is at most $4$, it follows immediately that $\mbox{rank-index}(   {S}_2 ) \leq 4$. To show that $\mbox{rank-index}(   {S}_2 ) \geq 4$, it needs to find a nonzero linear form in the homogeneous ideal of $\Phi_3 (  {S}_2 )$ (cf. Proposition \ref{prop:computation of the rank index}). Let
\begin{equation*}
Q = \sum_{0 \leq i < j \leq e+2} \alpha_{i,j} z_i z_j
\end{equation*}
be a nonzero element of $I(  {S}_2)$. It can be written as
\begin{equation*}
Q = \sum_{2 \leq i < j \leq e+1} a_{i,j} Q_{i,j} + \sum_{3 \leq i \leq e+1} b_i G_i
\end{equation*}
for some $a_{i,j}$'s and $b_i$'s in $\mathbb{K}$. Using this second expression, one can easily check that
\begin{equation*}
\alpha_{i,j} = 0 \quad \mbox{whenever} \quad i+j \leq 2.
\end{equation*}
Again, let $M$ be the $(e+3) \times (e+3)$ symmetric matrix associated to $Q$. From the following $4 \times 4$ sub-matrix
$$N = \begin{bmatrix}
2 \alpha_{0,0} & \alpha_{0,1}   & \alpha_{0,2}   & \alpha_{0,4}     \\
\alpha_{0,1}   & 2 \alpha_{1,1} & \alpha_{1,2}   & \alpha_{1,4} \\
\alpha_{0,2}   & \alpha_{1,2}   & 2 \alpha_{2,2} & \alpha_{2,4}\\
\alpha_{0,4}   & \alpha_{1,4}   & \alpha_{2,4}   & 2 \alpha_{4,4}
\end{bmatrix} = \begin{bmatrix}
0    & 0   & 0            & -b_3 \\
0    & 0   & b_3          & b_5 \\
0    & b_3 & 0            & \alpha_{2,4}\\
-b_3 & b_5 & \alpha_{2,4} & 2 \alpha_{4,4}
\end{bmatrix}$$
of $M$, it holds that
\begin{equation*}
\det (N) = b_3 ^4 \in I(4,M) \quad  \mbox{and hence} \quad b_3 \in \sqrt{I(4,M)} = I(\Phi_3 (  {S}_2 )).
\end{equation*}
This shows that the rank index of $  {S}_2$ is at least $4$ (cf. Proposition \ref{prop:computation of the rank index}).
\smallskip

\noindent $(3)$ Since $I(T) = \wp_3$ is generated by $A_2 \cup C$ and the rank of elements in $A_2 \cup C$ is at most $6$, it follows immediately that
the rank index of $T$ is at most $6$. To show the inequality $\mbox{rank-index}(T) \leq 6$, it needs to find a nonzero linear form in the homogeneous ideal of $\Phi_5 (T)$ (cf. Proposition \ref{prop:computation of the rank index}). Let
\begin{equation*}
Q = \sum_{0 \leq i < j \leq e+3} \alpha_{i,j} z_i z_j
\end{equation*}
be a nonzero element of $I(T)$. It can be written as
\begin{equation*}
Q = \sum_{3 \leq i < j \leq e+2} a_{i,j} Q_{i,j} + \sum_{4 \leq i \leq e+2} b_i H_i
\end{equation*}
for some $a_{i,j}$'s and $b_i$'s in $\mathbb{K}$. Again, let $M$ be the $(e+4) \times (e+4)$ symmetric matrix associated to $Q$. From the following $6 \times 6$ sub-matrix
$$N = \begin{bmatrix}
2 \alpha_{0,0} & \alpha_{0,1}   & \alpha_{0,2}   & \alpha_{0,3}  &  \alpha_{0,4}   & \alpha_{0,5}   \\
\alpha_{0,1}   & 2 \alpha_{1,1} & \alpha_{1,2}   & \alpha_{1,3}  &  \alpha_{1,4}   & \alpha_{1,5} \\
\alpha_{0,2}   & \alpha_{1,2}   & 2 \alpha_{2,2} & \alpha_{2,3}  &  \alpha_{2,4}   & \alpha_{2,5} \\
\alpha_{0,3}   & \alpha_{1,3}   &   \alpha_{2,3} & 2 \alpha_{3,3}  &  \alpha_{3,4}   & \alpha_{3,5} \\
\alpha_{0,4}   & \alpha_{1,4}   &   \alpha_{2,4} & \alpha_{3,4}  & 2 \alpha_{4,4}   & \alpha_{4,5} \\
\alpha_{0,5}   & \alpha_{1,5}   &   \alpha_{2,5} & \alpha_{3,5}  &  \alpha_{4,5}   & 2 \alpha_{5,5} \\
\end{bmatrix} = \begin{bmatrix}
0    & 0   & 0    & 0       & b_4       & b_5 \\
0    & 0   & 0    & b_4     & b_5       & b_6  \\
0    & 0   & 0    &0        & 0         & -b_4 \\
0    & b_4 & 0    &0        & 0         & a_{3,4} \\
b_4  & b_5 & 0    &0        & -2a_{3,4} & a_{3,5} \\
b_5  & b_6 & -b_4 &a_{3,4}  & a_{3,5}   & 2 a_{4,5}
\end{bmatrix}$$
of $M$, it holds that
\begin{equation*}
\det (N) = -b_4 ^6 \in I(5,M) \quad  \mbox{and hence} \quad b_4 \in \sqrt{I(5,M)} = I(\Phi_5 (T)).
\end{equation*}
By Proposition \ref{prop:computation of the rank index}, this completes the proof that the rank index of $T$ is equal to $6$.
\end{proof}

\end{document}